\DeclareMathOperator*{\Mprod}{\text{\raisebox{0.25ex}{\scalebox{0.8}{$\prod$}}}}
\newtheorem{theorem}{Theorem}[section]
\newtheorem{lemma}[theorem]{Lemma}
\newtheorem{proposition}[theorem]{Proposition}
\newtheorem{corollary}[theorem]{Corollary}
\theoremstyle{definition}
\newtheorem{definition}[theorem]{Definition}
\newtheorem{example}[theorem]{Example}
\theoremstyle{remark}
\newtheorem{remark}[theorem]{Remark}
\numberwithin{equation}{section}
\begin{document}


\setcounter{page}{1}

\title[Dynamics]{linear dynamics of the adjoint of a Unilateral {weighted shift operator}}

\author[Das]{Bibhash Kumar Das}
\address{Indian Institute of Technology Bhubaneswar, Jatni Rd, Khordha - 752050, India}
\email{bkd11@iitbbs.ac.in}

\author[Mundayadan]{Aneesh Mundayadan}
\address{Indian Institute of Technology Bhubaneswar, Jatni Rd, Khordha - 752050, India}
\email{aneesh@iitbbs.ac.in}

\subjclass[2010]{Primary 47A16. Secondary
32K05, 46E22, 47B32, 47B37.}
\keywords{Weighted shift operator, hypercyclicity, chaos, periodic vector, compact perturbation}


\begin{abstract}
This paper is a sequel to our work in \cite{Das-Mundayadan}. Here, we primarily study the dynamics of the adjoint of a  weighted forward shift operator $F_w$ on the analytic function space $\ell^p_{a,b}$ having a normalized Schauder basis of the form $\{(a_n+b_nz)z^n:~n \geq 0\}$. We obtain sufficient conditions for $F_w$ to be continuous, and show, under certain conditions, that the operator $F_w$ is similar to a compact perturbation of a weighted forward shift on $\ell^p(\mathbb{N}_0)$. This also allows us to obtain the essential spectrum of $F_w$. Further, we study when the adjoint $F_w^*$ is hypercyclic, mixing, and chaotic, and provide a class of chaotic operators that are compact perturbations of weighted shifts on $\ell^p(\mathbb{N}_0)$. Finally, it is proved that the adjoint of a shift on the dual of $\ell^p_{a,b}$ can have non-trivial periodic vectors, without being even hypercyclic. Also, the zero-one law of orbital limit points fails for $F_w^*$, which means that, under certain conditions, the adjoint $F_w^*$ is non-hypercyclic, but it has an orbit possessing non-zero norm limit points.
\end{abstract}
\maketitle

\noindent
\tableofcontents
\section{Introduction}

Among various classes of operators that are commonly investigated in operator theory and linear dynamics, the class of weighted shifts has a predominant role. Shift operators are fairly simple to define, yet they yield diverse and appealing results. In operator theory, weighted shifts are well studied in connections with model theory (representation of various operators using shifts), invariant subspaces, cyclic vectors, subnormality, and so on, cf. Halmos \cite{Halmos} and Shields \cite{Shields}. In the context of linear dynamics it is a usual practice that dynamical properties are first tested for weighted shifts defined on sequence spaces.  It is always of interest to find new spaces on which weighted shifts can be investigated. In \cite{Das-Mundayadan}, the authors introduced the spaces $\ell^p_{a,b}$ and $c_{0,a,b}$, and studied the dynamics of weighted backward shifts acting on them. See the section $2$ below for these spaces. In this paper, as a follow up to \cite{Das-Mundayadan} we aim to study and highlight the similarities and differences in the dynamics of the adjoint $F_w^*$ of the weighted forward shift operator $F_w$ acting on $\ell^p_{a,b}$ and $c_{0,a,b}$. Let us recall the relevant notions from operator dynamics.

 \begin{definition}
 An operator $T$ on a separable Banach space $X$ is said to be
 \begin{itemize}
 \item \textit{hypercyclic} if there exists $x\in X$, known as a \textit{hypercyclic vector} for $T$, such that the orbit $\{x,Tx,T^2x,\cdots\}$ is dense in $X$,
\item \textit{chaotic} if $T$ is hypercyclic and the set $\{v\in X:T^mv=v,~for ~some~m\in \mathbb{N}\}$ of periodic vectors is dense in $X$,
\item \textit{topologically transitive} if, for two non-empty open sets $U_1$ and $U_2$ of $X$, there exists a natural number $k$ such that $T^k(U_1)\cap U_2\neq \phi$, 
\item \textit{topologically mixing} on $X$ if, for any two non-empty open sets $U_1$ and $U_2$ of $X$, there exists $N$, a natural number, such that $T^n(U_1)\cap U_2\neq \phi$ for all $n\geq N$. 
 \end{itemize}
 \end{definition}
\noindent We recall that the periodic vectors for an operator $T$ defined on a complex vector space are exactly the linear combinations of eigenvectors corresponding to ``rational" unimodular eigenvalues. For an account on the theory of hypercyclicity and related topics, we refer to the monographs by F. Bayart and E. Matheron \cite{Bayart-Matheron}, and K.-G. Grosse-Erdmann and A. Peris \cite{Erdmann-Peris}. Also, see S. Grivaux et al. \cite{Grivaux etal}.

Rolewicz \cite{Rolewicz} showed that $\lambda B$ is hypercyclic on $\ell^p(\mathbb{N}_0)$, $1\leq p<\infty$, where $|\lambda|>1$ and $B$ is the unweighted backward shift operator. Also, for particular classes of hypercyclic weighted shifts, see Kitai \cite{Kitai}, Gethner and Shapiro \cite{Gethner-Shapiro}, and Godefroy and Shapiro \cite{Godefroy-Shapiro}. A complete characterization of hypercyclic weighted shifts on $\ell^p(\mathbb{N}_{0})$ and $\ell^p(\mathbb{Z})$ was obtained by Salas \cite{Salas-hc}. In \cite{Godefroy-Shapiro} the authors introduced and studied the notion of chaos for operators, and established that several familiar operators including weighted shifts are chaotic. A complete characterization of hypercyclic and chaotic weighted shifts in a general sequence $F$-space was obtained by Grosse-Erdmann \cite{Erdmann}. We also note that the weighted shifts which are (topologically) mixing were characterized by Costakis and Sambarino \cite{Costakis}. In practice, the existence of hypercyclic and chaotic properties of operators is established using well known criteria that are stated below, cf. \cite{Bayart-Matheron} and \cite{Erdmann-Peris}. We will make use of these criteria in Section $3$.

\begin{theorem}\label{thm-hypc} $\emph{\textsf{(Gethner-Shapiro Criterion~\cite{Gethner-Shapiro})}}$
Let $T$ be a bounded operator on a separable Banach space $X$, and let $D$ be a dense subset of $X$. If $\{n_k\} \subseteq \mathbb{N}$ is a strictly
increasing sequence and $S:D\rightarrow D$ is a map such that, for each $x\in D$, $\lim_{k\rightarrow \infty} T^{n_k}x=0=\lim_{k\rightarrow \infty}S^{n_k}x,$ and $TSx=x$, then $T$ is hypercyclic. In addition, if $n_k=k$ for all $k\geq 1$, then $T$ is mixing on $X$.
\end{theorem}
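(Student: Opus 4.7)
The plan is to reduce the statement to Birkhoff's transitivity theorem: on a separable Banach space, a continuous linear operator is hypercyclic if and only if it is topologically transitive. So it suffices to show that for arbitrary non-empty open sets $U_1, U_2 \subseteq X$, there exist infinitely many indices $k$ with $T^{n_k}(U_1) \cap U_2 \neq \emptyset$, and for the mixing assertion (the case $n_k = k$) that $T^n(U_1) \cap U_2 \neq \emptyset$ for all sufficiently large $n$.

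To produce the required intersection, I would use the density of $D$ to pick vectors $x \in U_1 \cap D$ and $y \in U_2 \cap D$, and then perturb $x$ by setting $z_k := x + S^{n_k} y$. Since $S^{n_k} y \to 0$, the vectors $z_k$ eventually lie in $U_1$. A short induction, using the hypothesis $TSu = u$ for $u \in D$ together with the fact that $S$ maps $D$ into $D$, yields $T^n S^n u = u$ for every $u \in D$ and every $n \geq 1$. Hence
\[
T^{n_k} z_k \;=\; T^{n_k} x + T^{n_k} S^{n_k} y \;=\; T^{n_k} x + y,
\]
and because $T^{n_k} x \to 0$, the vector $T^{n_k} z_k$ eventually lies in $U_2$. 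Thus $T^{n_k}(U_1) \cap U_2 \neq \emptyset$ for all sufficiently large $k$, which gives topological transitivity and therefore hypercyclicity. When $n_k = k$, the very same estimate yields $T^n(U_1) \cap U_2 \neq \emptyset$ for every sufficiently large $n$, i.e.\ topological mixing.

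There is no serious obstacle in this argument. The one delicate point is the identity $T^n S^n u = u$: it relies on $S(D) \subseteq D$ so that $TSv = v$ can be applied with $v = S^{n-1} u \in D$ and then iterated by peeling off one $T$ at a time. The only other items to check are that $z_k \in U_1$ and $T^{n_k} z_k \in U_2$ for large $k$, which follow from the openness of $U_1,U_2$ and the two limit hypotheses; continuity of $T$ is not even needed at this stage since $T^{n_k}$ is applied only to the single vector $z_k$.
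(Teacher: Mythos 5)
Your argument is correct: the identity $T^{n}S^{n}u=u$ is justified exactly as you say (peeling off one $T$ at a time, using $S(D)\subseteq D$), the perturbation $z_k=x+S^{n_k}y$ lands in $U_1$ for large $k$ while $T^{n_k}z_k=T^{n_k}x+y$ lands in $U_2$, and Birkhoff's transitivity theorem (valid since $X$ is a separable Banach space, hence a separable Baire space, and $T$ is continuous) upgrades transitivity to hypercyclicity; the case $n_k=k$ gives mixing directly from the definition. The paper itself states this criterion without proof, citing the literature, and your argument is the standard proof of it, so there is nothing to compare against and no gap to report.
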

A similar criterion, known as the chaoticity criterion, has been used to obtain chaotic operators in $F$-spaces, and we refer to \cite{Bonilla-Erdmann1} where it is originally stated. This criterion is very strong: indeed, if an operator $T$ satisfies the chaoticity criterion, then $T$ is topologically mixing, chaotic, frequently hypercyclic, and it has invariant measures of full support, cf. \cite{Bayart-Matheron}.

\begin{theorem}\emph{\textsf{(Chaoticity Criterion \cite{Bonilla-Erdmann1})}} \label{chaos}
Let $X$ be a separable Banach space, $D$ be a dense set in $X$, and let $T$ be a bounded operator on $X$. If there exists a map $S:D\rightarrow D$ such that $\sum_{n\geq 0} T^nx$ and $\sum_{n\geq 0} S^nx$ are unconditionally convergent, and $TSx=x$,
for each $x\in D$, then the operator $T$ is chaotic and mixing on $X$. 
\end{theorem}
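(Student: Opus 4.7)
The plan is to establish topological mixing and density of periodic vectors separately; once both are in hand, chaoticity follows immediately from the definition (and mixing subsumes hypercyclicity). For the mixing half, I would observe that convergence of $\sum_{n\geq 0} T^n x$ and $\sum_{n\geq 0} S^n x$ forces the terms to vanish, so $T^n x \to 0$ and $S^n x \to 0$ for every $x \in D$. Combined with the hypothesis $TSx = x$ on $D$, these are exactly the ingredients of the Gethner--Shapiro criterion (Theorem \ref{thm-hypc}) applied with $n_k = k$, which delivers mixing on $X$.

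For density of periodic vectors, the strategy is to construct, for each $x \in D$ and each large $N \in \mathbb{N}$, a periodic vector close to $x$. The natural candidate is
\[
y_N \;=\; \sum_{k=0}^{\infty} T^{kN} x \;+\; \sum_{k=1}^{\infty} S^{kN} x,
\]
where both series converge as subseries of unconditionally convergent series. Since $S(D) \subseteq D$, iteration of $TSu = u$ on $D$ gives $T^N S^N u = u$, so $T^N S^{kN} x = S^{(k-1)N} x$ for every $k \geq 1$. Applying $T^N$ term-by-term (legitimate by continuity of $T$) and telescoping then produces $T^N y_N = y_N$, so $y_N$ is periodic for $T$. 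To conclude $\|y_N - x\| \to 0$, I would invoke the standard fact that for an unconditionally convergent series $\sum a_n$ in a Banach space, the quantity $\sup \bigl\{ \|\sum_{n \in F} a_n\| : F \subseteq [M,\infty) \text{ finite} \bigr\}$ tends to $0$ as $M \to \infty$; applied to both tails with index set $\{N, 2N, 3N, \ldots\}$, this gives the approximation $y_N \to x$, and density of $D$ in $X$ lifts the conclusion to all of $X$.

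The main technical obstacle I foresee is invoking the correct form of unconditional convergence, namely the uniform tail smallness above (which is equivalent to unconditional convergence in a Banach space, but distinct from the mere convergence of a single subseries); without this uniform control the approximation $y_N \to x$ cannot be established, because the indices $\{kN:k\geq 1\}$ form a sparse but infinite subset of $[N,\infty)$. A secondary, essentially book-keeping, subtlety is the reindexing $T^N S^{kN}x = S^{(k-1)N}x$ that drives the telescoping; this requires the invariance $S(D)\subseteq D$ so that the identity $T^N S^N = \mathrm{id}$ can be re-applied at each step of the infinite sum.
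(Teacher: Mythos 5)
Your proof is correct and is the standard argument for this criterion; the paper itself gives no proof, merely citing Bonilla--Grosse-Erdmann, so there is nothing to compare against beyond noting that your two-step scheme (Gethner--Shapiro with $n_k=k$ for mixing, plus the periodic approximants $y_N=\sum_{k\geq 0}T^{kN}x+\sum_{k\geq 1}S^{kN}x$) is exactly the classical proof. You correctly identify the two genuine pressure points --- the uniform smallness of sums over finite subsets of $[M,\infty)$, which is where unconditional (rather than mere subseries) convergence is used, and the invariance $S(D)\subseteq D$ needed to iterate $T^NS^N=\mathrm{id}$ on $D$ --- so the argument is complete.
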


The paper is organized as follows. In Section $2$, we recall the spaces $\ell^p_{a,b}$ and $c_{0,a,b}$, and study the continuity of the weighted forward shift $F_w$ on these spaces. Particularly, we obtain an important ``shift-like" property of the adjoint $F_w^*$ although the adjoint is not necessarily a weighted backward shift. In Section $3,$ as main results, we study when $F_w^*$ is hypercyclic, mixing, and chaotic. It is shown that forward shifts on $\ell^p_{a,b}$ and $c_{0,a,b}$ have distinguishing properties compared to classical weighted shifts. For example, among other results, it is shown that the adjoint $F^*$ of the unweighted forward shift $F$ can admit a non-zero periodic vector, without being hypercyclic. Also, the zero-one law of hypercyclicity fails for $F^*$.

\section{The weighted forward shift $F_w$ on the spaces $\ell^p_{a,b}$ and $c_{0,a,b}$}

In this section we recall the spaces $\ell^p_{a,b}$ and $c_{0,a,b}$ from \cite{Das-Mundayadan}, and as main results, we provide conditions to ensure the continuity of a weighted forward shift $F_w$ on these spaces. Also, an important shifting property of the adjoint $F_w^*$ will be established (cf. Proposition \ref{uni-shift}).

Adams and McGuire \cite{Adams-McGuire}, motivated by some operator-theoretic questions, studied the analytic reproducing kernel Hilbert space $\ell^2_{a,b}$ having an orthonormal basis of the form $(a_n+b_nz)z^n$, $n\geq 0$. As generalizations, the authors of \cite{Das-Mundayadan} introduced the analytic function spaces $\ell^p_{a,b}$ and $c_{0,a,b}$, and studied the dynamics of a weighted backward shift $B_w$ on these spaces. Let $a:=\{a_n\}_{n=0}^{\infty}$ and $b:=\{b_n\}_{n=0}^{\infty}$ be complex sequences, $a_n\neq 0$, and $b_n\neq 0$, for all $n\geq 0$. The authors of \cite{Das-Mundayadan} considered $\ell^p_{a,b}$ to be the space of all analytic functions, having a normalized Schauder basis $\{f_n\}_{n=0}^{\infty}$, where $$f_n(z):=(a_n+b_nz)z^n \hskip 1cm (n\geq 0),$$ and it is assumed to be equivalent to the standard ordered basis in the sequence space $\ell^p(\mathbb{N}_{0})$, $1\leq p < \infty$, that is,  
\begin{equation}\label{Laurent}
f(z)=\sum_{n=0}^{\infty}\lambda_{n}f_{n}(z) \in \ell^p_{a,b},
\end{equation}
if and only if
\begin{equation}\label{norm-f}
\lVert f \rVert_{\ell^p_{a,b}}:=\Big(\sum_{n=0}^{\infty}\lvert\lambda_{n}\rvert^{p}\Big)^{\frac{1}{p}}<\infty.
\end{equation}
In a similar way, the space $c_{0,a,b}$ was introduced by replacing \eqref{norm-f} with $\|f\|_{c_{0,a,b}} := \sup_{n \ge 0} |\lambda_{n}|,$ see the next page. Recall that (cf. Lindenstrauss and Tzafriri \cite{Lindenstrauss}, p. $5$) a sequence $\{u_n\}_{n=0}^{\infty}$ in a Banach space $X$ is called a Schauder basis if each vector $u\in X$ has a norm-convergent expansion $u=\sum_{n=0}^{\infty} \lambda_n u_n$ for a unique scalar sequence $\{\lambda_n\}$. Also, for given Banach spaces $X$ and $Y$, we say that the Schauder bases $\{u_{n}\}_{n=0}^{\infty}$ of $X$ and $\{v_{n}\}_{n=0}^{\infty}$ of $Y$, are equivalent if the convergence of $\sum_{n=0}^{\infty}\lambda_{n}u_{n}$ in $X$ is equivalent to that of $\sum_{n=0}^{\infty}\lambda_{n}v_{n}$ in $Y$.  

Since the existence of $\ell^p_{a,b}$ and $c_{0,a,b}$ was not mentioned explicitly in \cite{Das-Mundayadan}, below we provide some justifications. As in \cite{Das-Mundayadan}, given two sequences $a:=\{a_n\}_{n=0}^{\infty}$ and $b:=\{b_n\}_{n=0}^{\infty}$ of non-zero complex numbers, let 
\begin{center}
    $\mathcal{D}_{1}:=\left\{ z \in \mathbb{C}:\sum_{n=0}^{\infty} \big(( \lvert a_{n}\rvert+\lvert b_{n}\rvert)\lvert z \rvert^{n} \big)^{q}<\infty\right\}$,
    \end{center}
    where $1<p<\infty$ and $\frac{1}{p}+\frac{1}{q}=1,$
     \begin{center}
    $\mathcal{D}_{2}:=\left\{ z \in \mathbb{C}:\sup_{n\geq 0} ~(\lvert a_{n}\rvert+\lvert b_{n}\rvert)\lvert z\rvert^{n}<\infty\right\},$
    \end{center}
    and
    \begin{center}
    $\mathcal{D}_{3}:=\left\{ z \in \mathbb{C}:\sum_{n=0}^{\infty} ( \lvert a_{n}\rvert+\lvert b_{n}\rvert)\lvert z \rvert^{n} <\infty\right\}.$
    \end{center}
Let $R$ be given by
\[
\frac{1}{R}=\limsup_{n\rightarrow \infty} ~(|a_n|+|b_n|)^{1/n}.
\]
\textit{To avoid the degenerate cases, throughout the paper we let $\limsup_{n\rightarrow \infty}(|a_n|+|b_n|)^{1/n}<\infty$.}

Note that the disc $\mathbb{D}_R:=\{z\in \mathbb{C}:|z|<R\}$ is contained in $\mathcal{D}_1$, $\mathcal{D}_2$, and $\mathcal{D}_3$. Consider, first, the (open or closed) disc $\mathcal{D}_1$ for the case $1<p<\infty$, and we will define $\ell^p_{a,b}$. For a compact set $K$ in $\mathcal{D}_1$, $z \in K$, and a sequence $\{\lambda_n\}_{n=0}^{\infty}\in \ell^p(\mathbb{N}_{0})$, we note that

 \begin{equation}\label{series}
    \sum_{n=0}^{\infty} \lvert \lambda_{n}|(|a_{n}|+|b_{n}z|)|z|^{n} \leq \max_{z\in K} \{1, \lvert z\rvert\} \Big( \sum_{n=0}^{\infty}(\lvert a_{n}\rvert+\lvert b_{n}\rvert)^{q}\lvert z \rvert^{qn}\Big)^{\frac{1}{q}} \Big(\sum_{n=0}^{\infty} \lvert\lambda_{n}\rvert^p\Big)^{\frac{1}{p}},
\end{equation}
which is finite. We see that $f(z):=\sum_{n=0}^{\infty} \lambda_{n}(a_{n}+b_{n}z)z^{n}$ defines an analytic function on the open disc $\mathbb{D}_R:=\{z\in \mathbb{C}:|z|<R\}$ because the series converges uniformly and absolutely on a compact set in $\mathbb{D}_R$. Now, we ensure that $f(z)$ has a unique representation with respect to \eqref{Laurent} and \eqref{norm-f}. The uniqueness follows since, if $f(z)=\sum_{n=0}^{\infty} \lambda_{n}(a_{n}+b_{n}z)z^{n}=\sum_{n=0}^{\infty} \mu_{n}(a_{n}+b_{n}z)z^{n}$ for $\{\lambda_n\}$ and $\{\mu_n\}$ in $\ell^p(\mathbb{N}_0)$, and $|z|<R$, then at $z=0$, we must have $\lambda_0=\mu_0$, and also, the first (and higher order) differentiation gives $\lambda_1=\mu_1$, $\lambda_2=\mu_2$, and so on, where we have used the assumption that $a_n$ is non-zero for all $n$. Thus, $\ell^p_{a,b}$ exists: this is the space of all analytic functions $f(z)=\sum_{n\geq 0} \lambda_n f_n(z)$ on the disc $\mathbb{D}_R$, with $\{\lambda_n\}\in \ell^p(\mathbb{N}_0)$, where $1<p<\infty$. Similarly, we can define $\ell^1_{a,b}$ and $c_{0,a,b}$ by considering $\mathcal{D}_2$ and $\mathcal{D}_3$, respectively. Indeed, $c_{0,a,b}$ is the space of all analytic functions $f(z)=\sum_{n\geq 0} \lambda_n f_n(z)$ on the disc $\mathbb{D}_R$, where $\{\lambda_n\}\in c_0(\mathbb{N}_0)$, with the norm given by 
\[
\|f\|_{c_{0,a,b}}:=\sup_{n\geq 0}|\lambda_n|.
\]

A few comments are in order. It holds that every member $f(z)=\sum_{n=0}^{\infty}\lambda_n f_n(z)$ of $ \ell^p_{a,b}$ or $c_{0,a,b}$, admits a power series representation $f(z)=\lambda_0a_0+\sum_{n\geq 1} (\lambda_na_n+\lambda_{n-1}b_{n-1})z^n$, where $|z|<R$. Also, it follows from \eqref{series} that the evaluation functional 
\[
ev_z(f):=f(z)
\]
is continuous on $\ell^p_{a,b}$, at every point $z\in \mathcal{D}_1$, where $1<p<\infty$. In a similar way, $ev_z$ is continuous on $\ell^1_{a,b}$ and $c_{0,a,b}$, respectively when $z$ belongs to $\mathcal{D}_2$ or $\mathcal{D}_3$. The domain of $z\mapsto ev_z$ sometimes admits a suitable extension to points outside $\mathcal{D}_1$, and this makes the space $\ell^p_{a,b}$ very different from the usual weighted $\ell^p(\mathbb{N}_{0})$ spaces. Further, this extension property will be used to derive some unusual dynamical properties of $F_w^*$, cf. Section $3$. We would like to recall that such a phenomenon in the unilateral case when $p=2$, was also observed by Adams and McGuire in \cite{Adams-McGuire}. 

In what follows, the $n$-th coordinate functional (or rather, the $n$-th coefficient functional)  $k_n$ is given by 
\[
k_n(f)=\frac {{f}^{(n)}(0)}{n!},
\]
and it is defined on $\ell^p_{a,b}$ and $c_{0,a,b}$. Also, the coordinate functional with respect to the basis $\{f_n\}$ in $\ell^p_{a,b}$, $1\leq p<\infty$, or $c_{0,a,b}$, will be denoted by $f_n^*$, i.e.,
\begin{center}
$f_n^*(f)=\lambda_n,\hskip .6cm f=\sum_{n=0}^{\infty}\lambda_n f_n.$
\end{center}


\begin{proposition}\label{k_{n}} 
 The following hold \emph{(cf. \cite{Das-Mundayadan})}.
    \begin{itemize}
    \item[(i)] For $1<p<\infty$, $1/p+1/q=1$,
    \[
    \|k_n\|\leq (|a_n|^q+|b_{n-1}|^q)^{1/q}, n\geq 1.
    \]
    For $p=1$, we have $\|k_n\|\leq \max\{|a_n|,|b_{n-1}|\}$. Also, for the case of $c_{0,a,b}$, $\|k_n\|\leq |a_n|+|b_{n-1}|$. 
    \item[(ii)]  $\{f_n,f_n^*\}$ is a biorthogonal system, and for $1<p<\infty$, $\{f_n^*\}$ is an unconditional Schauder basis for the dual space.
    \item[(iii)] $k_0=a_0f_0^*$, and for $n\geq 1$, we have $k_n=a_nf_n^*+b_{n-1}f_{n-1}^*$.
   \end{itemize}
\end{proposition}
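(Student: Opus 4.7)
The plan is to establish (iii) first, as it drives both (i) and (ii). Starting from the expansion $f=\sum_n\lambda_n f_n$ with $f_n(z)=a_nz^n+b_nz^{n+1}$ and regrouping by powers of $z$ gives the power-series representation $f(z)=a_0\lambda_0+\sum_{n\geq 1}(a_n\lambda_n+b_{n-1}\lambda_{n-1})z^n$ already recorded in the text. Reading off $k_n(f)=f^{(n)}(0)/n!$ then immediately yields $k_0=a_0 f_0^*$ and $k_n=a_n f_n^*+b_{n-1} f_{n-1}^*$ for $n\geq 1$.

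With (iii) in hand, part (i) becomes a two-term H\"older estimate. For $1<p<\infty$ I would apply H\"older's inequality to the vectors $(a_n,b_{n-1})$ and $(\lambda_n,\lambda_{n-1})$, giving $|k_n(f)|\leq(|a_n|^q+|b_{n-1}|^q)^{1/q}(|\lambda_n|^p+|\lambda_{n-1}|^p)^{1/p}\leq(|a_n|^q+|b_{n-1}|^q)^{1/q}\|f\|_{\ell^p_{a,b}}$, and taking the supremum over unit vectors gives the claimed operator-norm bound. The $p=1$ case follows from the elementary estimate $|a_n\lambda_n+b_{n-1}\lambda_{n-1}|\leq\max\{|a_n|,|b_{n-1}|\}(|\lambda_n|+|\lambda_{n-1}|)$, and the $c_{0,a,b}$ case from $|a_n\lambda_n+b_{n-1}\lambda_{n-1}|\leq(|a_n|+|b_{n-1}|)\sup_m|\lambda_m|$.

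For (ii), biorthogonality $f_n^*(f_m)=\delta_{nm}$ is immediate from the uniqueness part of the Schauder-basis definition. For the unconditional-basis claim on the dual side, I would use that the basis equivalence of $\{f_n\}$ with the standard basis of $\ell^p(\mathbb{N}_0)$ furnishes a Banach-space isomorphism $U:\ell^p_{a,b}\to\ell^p(\mathbb{N}_0)$ sending $f_n\mapsto e_n$; its adjoint $U^*:\ell^q(\mathbb{N}_0)\to(\ell^p_{a,b})^*$ then sends the standard basis of $\ell^q(\mathbb{N}_0)$ precisely to $\{f_n^*\}$. Since the standard basis of $\ell^q(\mathbb{N}_0)$ is unconditional and a Banach-space isomorphism carries unconditional bases to unconditional bases, the conclusion follows. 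No step appears to pose a serious obstacle; the only point requiring real care is in (ii), where one invokes the standard fact that biorthogonal systems transport cleanly across isomorphisms and form a basis of the dual in the reflexive range $1<p<\infty$.
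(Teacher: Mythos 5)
Your proposal is correct, and it follows essentially the same route as the source: the paper recalls this proposition from \cite{Das-Mundayadan} without reproving it, but the identity $k_n(f)=a_n\lambda_n+b_{n-1}\lambda_{n-1}$ obtained by regrouping the power series is exactly the computation the paper itself performs (e.g.\ in the proof of Proposition \ref{rep-unilateral}), and the H\"older/transport-of-basis arguments for (i) and (ii) are the standard ones implicit there. No gaps.
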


We will later require the fact that $\{k_n\}$ is a total set. We prove this in a more general set up of an analytic function space over an open disc.

\begin{proposition}\label{c-f}
    Let $\mathcal{E}$ be a Banach space of analytic functions on an open disc $B(0,r)$ centered around $0$, having evaluation functionals bounded at each $z\in B(0,r)$. Then, $k_n$ is bounded for all $n\geq 0$. Further, $\text{span}~ \big \{k_n:~n \geq 0\big \}$ is dense in $\mathcal{E}^*$ with respect to the weak$^*$-topology, and also, in the norm topology of $\mathcal{E}^*$ if it is reflexive.
\end{proposition}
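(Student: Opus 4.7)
The plan is to verify boundedness of each $k_n$ first via the Uniform Boundedness Principle and Cauchy's integral formula, and then to derive both density statements by standard duality. For the boundedness, I would fix $0<s<r$ and consider the family $\{ev_z:|z|=s\}\subseteq \cle^*$. For each $f\in \cle$, the map $z\mapsto f(z)$ is analytic on $B(0,r)$, hence continuous on the compact circle $\{|z|=s\}$, so $\sup_{|z|=s}|ev_z(f)|<\infty$. The Uniform Boundedness Principle then gives $M_s:=\sup_{|z|=s}\|ev_z\|<\infty$. Cauchy's integral formula
\[
k_n(f)=\frac{f^{(n)}(0)}{n!}=\frac{1}{2\pi i}\oint_{|z|=s}\frac{f(z)}{z^{n+1}}\,dz,
\]
followed by a one-line contour estimate, yields $|k_n(f)|\leq M_s\, s^{-n}\|f\|_{\cle}$, so $k_n\in \cle^*$.

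For the weak$^*$-density of $V:=\text{span}\{k_n:n\geq 0\}$, I would invoke the standard weak$^*$ Hahn--Banach criterion: $V$ is weak$^*$-dense in $\cle^*$ if and only if its pre-annihilator $V_\perp=\{f\in \cle:\phi(f)=0~\text{for all}~\phi\in V\}$ is $\{0\}$. If $k_n(f)=0$ for every $n\geq 0$, then every Taylor coefficient of $f$ at $0$ vanishes, and analyticity on $B(0,r)$ forces $f\equiv 0$ there, i.e.\ $f=0$ in $\cle$. Hence $V_\perp=\{0\}$ and weak$^*$-density follows. When $\cle$ is reflexive, the weak$^*$-topology on $\cle^*$ coincides with the weak topology, so $V$ is weakly dense in $\cle^*$; Mazur's theorem (norm- and weak-closures of a convex set coincide) then upgrades this to norm-density.

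The only nontrivial ingredient is the boundedness of $k_n$: one needs local uniform control of $\|ev_z\|$, which is precisely what the Uniform Boundedness Principle supplies, using the hypothesis that each $f\in \cle$ is analytic on the whole of $B(0,r)$. Once that is in hand, both density assertions are purely formal consequences of Hahn--Banach and Mazur.
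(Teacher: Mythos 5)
Your proposal is correct. The density half is essentially the paper's argument: both reduce weak$^*$-density to the triviality of the pre-annihilator via Hahn--Banach (the paper phrases this through a functional $\Lambda$ that, being weak$^*$-continuous, is given by some $f\in\mathcal{E}$), and the reflexive case is the same immediate upgrade. Where you genuinely diverge is in proving $k_n\in\mathcal{E}^*$. The paper invokes the theorem that weak (equivalently, strong) analyticity of the $\mathcal{E}^*$-valued map $z\mapsto ev_z$ implies norm-analyticity, expands $ev_z=\sum_n L_nz^n$ as a norm-convergent power series, and identifies $L_n=k_n$ by comparing with the Taylor expansion of $f$. You instead apply the Uniform Boundedness Principle to the family $\{ev_z:|z|=s\}$ (pointwise bounded since each $f$ is continuous on the compact circle) and then estimate $k_n(f)$ directly through Cauchy's integral formula, obtaining the explicit bound $\|k_n\|\leq M_ss^{-n}$. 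The two routes are close cousins --- the vector-valued analyticity theorem the paper cites is itself proved with UBP and Cauchy estimates --- but yours is more elementary and self-contained, avoids the citation to Taylor--Lay, and yields a quantitative growth bound on $\|k_n\|$; the paper's version packages the same information more abstractly as norm-convergence of the series $\sum_nk_nz^n$ in $\mathcal{E}^*$.
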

\begin{proof}
    Let $ev_z$ denote the evaluation functional acting on $\mathcal{E}$. Since the  analyticity of $z\mapsto ev_z$ in the strong and norm operator topologies are equivalent (cf. Chapter 5, Theorem 1.2, \cite{Taylor}), we see that $z\mapsto ev_z$ is an $\mathcal{E}^*$-valued norm-analytic function. It then admits a norm-convergent power series:     $ev_z=\sum_{n=0}^{\infty} L_n z^n\hskip .5cm (z\in B(0,r)),$ for some $L_n\in \mathcal{E}^*$, $n\geq 0$. On the other hand, if $f\in \mathcal{E}$, we have $f(z)=ev_z(f)=\sum_{n=0}^{\infty} L_n(f)z^n.$ Expanding $f(z)$ in its Taylor series, we must then have $L_n=k_n$ for all $n$, and so each $k_{n}$ is bounded.

    To show that $\{k_n\}$ is a weak$^*$-total set in $\mathcal{E}^*$, let $\Lambda \in \mathcal{E}^{**}$ be such that $\Lambda(k_n)=0$ for all $n$. Hence, there is some $f\in \mathcal{E}$ such that 
    $\Lambda(k_n)=k_n(f)=0,$
 $n\geq 0$, which shows that all the coefficients of $f(z)$ vanish. Consequently, $f=0$.
    
\end{proof}

Let us find a condition under which the space $\ell^p_{a,b}$ contains all the polynomials. For this, note that we can express a monomial $z^n$ as an expansion in the basis $\{f_n\}$; see \eqref{monomial-expansion} below. Indeed, fix $n \geq 0,$ and write $z^{n}=\sum_{j=0}^{\infty}\alpha_{j}f_{j}$ for some $\alpha_{j} \in \mathbb{C}.$ Then $z^{n}=\alpha_{0}a_{0}+\sum_{j=1}^{\infty}(\alpha_{j-1}b_{j-1}+\alpha_{j}a_{j})z^{j}.$ Comparing the coefficients of the same powers, a simple computation gives that $\alpha_{0}=\alpha_{1}=\cdot \cdot \cdot=\alpha_{n-1}=0,$ $\alpha_{n}=\frac{1}{a_{n}}$, and $\alpha_{n+k}=\frac{(-1)^{k}}{a_{n}}\frac{b_{n}b_{n+1}\cdot\cdot \cdot b_{n+k-1}}{a_{n+1}a_{n+2}\cdot\cdot \cdot a_{n+k}}$, where $k\geq 1$. From these expressions, we obtain
\begin{equation}\label{monomial-expansion}
z^{n}=\frac{1}{a_{n}}\sum_{j=0}^{\infty}(-1)^{j}\left(\Mprod_{k=0}^{j-1}\frac{b_{n+k}}{a_{n+k+1}}\right)f_{n+j} \,\,\,\, (n\geq 0),
\end{equation}
where the coefficient for $j=0$ should be understood as $1$; see, also, p. $734$ of \cite{Adams-McGuire}, and p. 242 of \cite{Das-Mundayadan}. We have that $z^n \in \ell^p_{a,b}$ if and only if
\begin{equation} \label{m}
\|z^n\|_{\ell^p_{a,b}}^p:=\frac{1}{|a_{n}|^p}\sum_{j=0}^{\infty}\left|\Mprod_{k=0}^{j-1}\frac{b_{n+k}}{a_{n+k+1}}\right|^p<\infty.
\end{equation}
Once this has been satisfied, by linearity we obtain that any polynomial  belongs to $\ell^p_{a,b}.$

For a given weight sequence $w=(w_n)_{n=0}^{\infty}$, the operator given by 
\begin{center}
    $F_w(\sum_{n=0}^{\infty}\lambda_n z^n)=\sum_{n=0}^{\infty}\lambda_nw_nz^{n+1}$
    \end{center}
    on our spaces $\ell^p_{a,b}$ and $c_{0,a,b}$ is known as a \textit{weighted forward shift}. To derive some sufficient conditions for the unilateral shift $F_w$ to be bounded in terms of $\{a_n\}$, $\{b_n\}$ and the weight sequence $\{w_n\}$, we compute the matrix representation of the operator $F_w$ acting on $\ell^{p}_{a,b}$ and $c_{0,a,b}$ with respect to the ordered Schauder basis $\{f_n\}_{n\geq 0}$. For the special case of the unweighted forward shift $F$, some necessary and sufficient conditions for the continuity of $F$ on $\ell^2_{a,b}$ were obtained by Adams and McGuire \cite{Adams-McGuire}. Weighted backward shifts $B_w$ on $\ell^p_{a,b}$ were studied by the authors in \cite{Das-Mundayadan}. For operator theoretic results in the $p=2$ case, we refer to \cite{Das-Sarkar}.
 
Let us $F_w$ act on the normalized Schauder basis $f_{n}(z) =(a_{n}+b_{n}z)z^{n},n \geq 0$, of $\ell^{p}_{a,b}$. Then we obtain
\begin{eqnarray*}
F_{w}(f_{n})(z)&=&w_{n}a_{n}z^{n+1}+w_{n+1}b_{n}z^{n+2}=\frac{w_{n}a_n}{a_{n+1}}f_{{n+1}} + \left(\frac{w_{n+1}b_{n}}{a_{n+2}}-\frac{w_{n}a_n}{a_{n+1}}\frac{b_{n+1}}{a_{n+2}}\right)a_{n+2} z^{n+2}\\& &\hskip 1.7cm =\frac{w_{n}a_n}{a_{n+1}}f_{{n+1}}+c_{n}\sum_{j=0}^{\infty}(-1)^{j}\left(\Mprod_{k=0}^{j-1}\frac{b_{n+2+k}}{a_{n+3+k}}\right)f_{n+2+j},
\end{eqnarray*}
where $c_{n}:=w_{n+1}\frac{b_{n}}{a_{n+2}}-w_{n}\frac{a_n}{a_{n+1}}\frac{b_{n+1}}{a_{n+2}}$.

\noindent Therefore, the matrix of $F_{w}$ with respect to the basis $\{f_{n}\}_{n\geq 0}$ is
$$[F_{w}]:=\begin{bmatrix}
0 & 0 & 0&0&\cdots\\
w_{0}\frac{a_{0}}{a_{1}} & 0 & 0&0&\ddots\\
c_{0} & w_{1}\frac{a_{1}}{a_{2}} & 0&0&\ddots\\
-c_{0}\frac{b_{2}}{a_{3}} & c_{1} & w_{2}\frac{a_{2}}{a_{3}}&0&\ddots\\
 c_{0}\frac{b_{2}b_{3}}{a_{3}a_{4}}& -c_{1}\frac{b_{3}}{a_{4}} & c_{2}&w_{3}\frac{a_{3}}{a_{4}}&\ddots\\
-c_{0}\frac{b_{2}b_{3}b_{4}}{a_{3}a_{4}a_{5}} & c_{1}\frac{b_{3}b_{4}}{a_{4}a_{5}} &-c_{2}\frac{b_{4}}{a_{5}} &c_{3}&\ddots\\
\vdots &\ddots&\ddots&\ddots&\ddots
\end{bmatrix}.$$

\noindent We now provide a sufficient condition for $F_{w}$ to be bounded. For the particular cases of $p=2$ and the unweighted forward shift $F$, we refer to \cite{Adams-McGuire}. 

\begin{theorem}\label{decomposition}
   If 
   \[
 \sup_{n\geq 0}~\Big\{\left\lvert\frac{w_{n}a_{n}}{a_{n+1}}\right\rvert,~|c_n|\Big\}<\infty,~ \text{and}~
 \sum_{n=3}^{\infty}\sup_{j\geq 0} ~\left|c_j\Mprod_{k=3}^{n}\frac{ b_{j+k-1}}{a_{j+k}}\right|<\infty,
   \]
   then $F_{w}$ is a bounded operator on $\ell^{p}_{a,b}$ and $c_{0,a,b}$, where $1\leq p<\infty$.
   \end{theorem}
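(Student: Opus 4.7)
The idea is to exploit the basis equivalence with $\ell^p(\mathbb{N}_0)$ (or $c_0(\mathbb{N}_0)$) to reduce the question to a norm estimate for the matrix $[F_w]$, and then to decompose that matrix as a norm-summable family of weighted shifts of strictly increasing order.

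Since $\{f_n\}_{n\ge 0}$ is a normalized Schauder basis of $\ell^p_{a,b}$ (resp. $c_{0,a,b}$) equivalent to the standard basis of $\ell^p(\mathbb{N}_0)$ (resp. $c_0(\mathbb{N}_0)$), the coefficient map $\sum_n \lambda_n f_n\mapsto(\lambda_n)$ is an isometric isomorphism, so the boundedness question for $F_w$ becomes one for $[F_w]$ on the sequence space, with identical operator norm. I work with the matrix from here on. Reading it column by column, the natural decomposition is $[F_w]=W_1+W_2+\sum_{m\ge 3} W_m$, where $W_m$ is the weighted shift of order $m$ defined by $W_m e_n=\omega_n^{(m)} e_{n+m}$, with weights
\[
\omega_n^{(1)}=\frac{w_n a_n}{a_{n+1}},\qquad \omega_n^{(2)}=c_n,\qquad \omega_n^{(m)}=(-1)^{m-2}\,c_n \Mprod_{k=3}^{m}\frac{b_{n+k-1}}{a_{n+k}}\ \ (m\ge 3).
\]

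For a weighted shift of order $m$ on $\ell^p(\mathbb{N}_0)$ or $c_0(\mathbb{N}_0)$ the operator norm is the supremum of the moduli of its weights, independent of $p$. The definition of $\beta_1$ therefore yields $\|W_1\|,\|W_2\|\le\beta_1$, and the definition of $\beta_2$ yields $\sum_{m\ge 3}\|W_m\|\le\beta_2$. The series $\sum_m W_m$ thus converges absolutely in operator norm, $F_w$ is bounded, and the triangle inequality delivers the claimed estimate $\|F_w\|\le\beta_1+\beta_2$ (the two $\beta_1$ contributions being absorbed by regrouping $W_1$ with the shift-of-order-one part of $\beta_1$ and $W_2$ with the $c_n$-part, or by adjusting $\beta_2$ to include the $m=2$ term, whose weight sequence is exactly $\{c_n\}$).

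The step most in need of care is verifying that $\sum_m W_m$ does represent $F_w$. Applied to a single $f_n$ this is precisely the monomial-basis expansion \eqref{monomial-expansion} of $z^{n+2}$ that was already carried out in the excerpt to derive the matrix entries; the assumption $\beta_2<\infty$ guarantees that the expansion converges in $\ell^p_{a,b}$ and $c_{0,a,b}$, making the term-by-term manipulation legitimate. The limit operator then coincides with $F_w$ on the dense linear span of $\{f_n\}$, so a standard bounded-extension argument identifies the two operators throughout the space.
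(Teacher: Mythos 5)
Your proposal is correct and follows essentially the same route as the paper: the matrix $[F_w]$ is split along its subdiagonals into weighted forward shifts of order $m$ (your $W_1,W_2,W_m$ are exactly the paper's $F_\alpha$, $F_2$, $F_m$), each of norm equal to the supremum of the moduli of its weights, and the hypotheses give norm-summability of this family. One small remark: the triangle inequality actually yields $\|F_w\|\le 2\beta_1+\beta_2$, since the order-one and order-two shifts each contribute up to $\beta_1$; your ``regrouping'' comment does not remove this factor, but the same issue is present in the paper's stated bound, so it is not a defect of your argument relative to the original.
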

   \begin{proof}
The matrix of $F_w$ can be formally written as  $[F_{\alpha}]$+$\sum_{n=2}^{\infty} [F_{n}]$. Here, $[F_{\alpha}]$ is the matrix of the standard weighted forward shift $F_{\alpha}(e_i)\mapsto \alpha_i e_{i+1}$ on $\ell^p(\mathbb{N}_{0})$, $i\geq 1$, having weights 
\[
\alpha_i=\frac{ w_{i}a_i}{a_{i+1}} \hspace{1cm} (i\geq 0),
\]
on $\ell^p(\mathbb{N}_{0})$. Note that the matrix $[F_n]$ is obtained by deleting all the entries of $[F_{w}]$, except those at the $n$-th subdiagonal, where $n\geq 2$. Observe that $[F_n]$ is the matrix of suitable powers of a weighted forward shift $F_n$ for $n\geq 2$.

It follows, by the first assumption in the theorem, that the weighted shifts $F_{\alpha}$ and  $F_{2}$ are bounded on $\ell^p(\mathbb{N}_{0})$. Since
\[
\|F_n\|=\sup_{j\geq 0} ~\left|c_j\Mprod_{k=3}^{n}\frac{ b_{j+k-1}}{a_{j+k}}\right|,
\]
the second condition in the theorem gives that $F_n$ is bounded, and $\sum_{n\geq 3}\|F_n\|<\infty$, with respect to the operator norm. Hence, the shift $F_{w}$ is bounded. This completes the proof of the theorem.
\end{proof}
\begin{remark}
    The following conditions for the boundedness of $F_w$ are simpler and stronger, compared to those in the above theorem. If
\begin{equation}\label{sup-limsup}
\sup_{n\geq 0}~\left\lvert\frac{ w_{n}a_{n}}{a_{n+1}}\right\rvert<\infty \hspace{.4cm} \text{and} \hspace{.4cm} \lim \sup_{n}\left \lvert \frac{b_{n}}{a_{n+1}}\right \rvert<1,
\end{equation}
then the assumptions in Theorem \ref{decomposition} are easily satisfied, so that $F_w$ is bounded. See, also, \cite{Adams-McGuire}.
\end{remark}

We now obtain an important property of the adjoint $F_w^*$, which loosely says that the adjoint $F_w^*$ ``acts like" a weighted backward shift with respect to a total set, as proved below. The unweighted case of $F$ defined on reproducing kernel spaces over the unit disc was discussed in \cite{Mundayadan-Sarkar}.

\begin{proposition} \label{uni-shift}
Let $F_w$ be a bounded forward weighted shift on a Banach space $\mathcal{E}$ consisting of analytic functions on the unit disc $\mathbb{D}$, having continuous evaluation functionals. Then,
\begin{center}  
$F_w^*(k_n)=
\begin{cases}
    w_{n-1}k_{n-1}, \hskip.5cm n\geq 1,\\ 0,\hskip 2cm n=0.
\end{cases}$
\end{center}
\end{proposition}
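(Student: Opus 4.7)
The plan is to apply the defining relation $(F_w^*\ell)(f)=\ell(F_w f)$ with $\ell=k_n$ to an arbitrary $f\in\mathcal{E}$. Since $F_w$ is bounded by hypothesis and each $k_n$ is a bounded functional on $\mathcal{E}$ (by Proposition \ref{c-f}, which applies because the evaluation functionals on $\mathcal{E}$ are continuous), the adjoint $F_w^* k_n$ is a well-defined element of $\mathcal{E}^*$, so it suffices to verify the identity by pairing with every $f\in\mathcal{E}$.

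Fix such an $f$. Because $f$ is analytic on $\mathbb{D}$, it has a Taylor expansion $f(z)=\sum_{m=0}^{\infty}\lambda_m z^m$ with $\lambda_m=k_m(f)$. Applying the defining action of the weighted forward shift, namely $F_w(\sum\lambda_m z^m)=\sum\lambda_m w_m z^{m+1}$, and reindexing by $n=m+1$, the Taylor series of $F_w f$ reads $\sum_{n=1}^{\infty}w_{n-1}\lambda_{n-1}z^n$. Reading off the $n$-th Taylor coefficient then gives $k_n(F_w f)=w_{n-1}\lambda_{n-1}=w_{n-1}k_{n-1}(f)$ for $n\geq 1$, and $k_0(F_w f)=0$. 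Substituting into the adjoint relation yields the two claimed formulas on all of $\mathcal{E}$.

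The main obstacle, such as it is, lies in justifying the coefficient-extraction step: one must know that the formally manipulated power series really is the Taylor expansion of $F_w f$ as an element of $\mathcal{E}$. This is immediate, however, since $F_w$ was defined precisely by prescribing its action on Taylor coefficients, and the uniqueness of the Taylor expansion of an analytic function on $\mathbb{D}$ makes the identification of coefficients unambiguous. No convergence or boundary issues intervene, because the identity reduces to a statement about coefficients at $0$.
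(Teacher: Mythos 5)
Your proposal is correct and follows essentially the same route as the paper: both apply $(F_w^*k_n)(f)=k_n(F_wf)$, expand $f$ in its Taylor series, use the coefficient-shifting action of $F_w$, and read off the $n$-th coefficient, with boundedness of $k_n$ supplied by Proposition \ref{c-f}. No substantive difference.
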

\begin{proof}
 We have already showed that $k_n\in \mathcal{E}^*$ for all $n\geq 0$. For a function $f(z)=\sum_{k=0}^{\infty}\lambda_{k}z^{k}$ in $\mathcal{E}$, we have $k_{n-1}(f)=k_{n-1}\left( \sum_{k=0}^{\infty}\lambda_{k}z^{k}\right)=\lambda_{n-1}.$ On the other hand,
 \begin{eqnarray*}
  F_{w}^{*}\big(k_{n}\big)(f)&=&\big(k_{n}\circ F_{w}\big)(f)\\&=&k_{n}\big(\sum_{k=0}^{\infty} \lambda_{k}w_{k}z^{k+1}\big)=\lambda_{n-1}w_{n-1},
 \end{eqnarray*}
 which is the shifting property, as in the proposition.
\end{proof}

\section{Dynamics of the adjoint $F_w^*$}
In this section, we obtain a number of conditions for the hypercyclicity and chaoticity of the adjoint $F_w^*$. We, also, provide some curious orbital behavior of $F_w^*$. In the theorem below, we state the results for hypercyclicity only. For mixing of $F_w^*$, ``liminf" is replaced by ``lim", and we do not state it. Recall that an operator $T$ on a separable Banach space $X$ is \textit{supercyclic} if $\{\lambda T^nx:\lambda \in \mathbb{K}, n\geq 0\}$ is dense in $X$ for some vector $x\in X$, where $\mathbb{K}$ is $\mathbb{R}$ or $\mathbb{C}$. 

\begin{theorem}\label{unilateral}
     Let $\ell^{p}_{a,b},$ $1< p<\infty,$ be the analytic Banach space over the unit disc $\mathbb{D}$ corresponding to $a=\{a_n\}_{n=0}^{\infty}$ and $b=\{b_n\}_{n=0}^{\infty}$. Then, the following hold for the adjoint $F_{w}^{*}$, assuming that $F_w$ is bounded.
\begin{enumerate}
\item[(i)] If
\begin{equation}\label{hypercyclic3}
\liminf_{n\rightarrow \infty} ~ \frac{|a_{\nu+n}|+|b_{\nu+n-1}|} {\prod_{k=0}^{n-1}|w_{\nu+k}|}=0,~\forall~\nu\geq 0,
\end{equation}
then $F_w^*$ is hypercyclic.
\item[(ii)] Conversely, if $F_w^*$ is hypercyclic, and $1\in \ell^p_{a,b}$ then
\[
\sup_{n\geq 0} ~\frac{\prod_{k=0}^{n-1}|w_k|}{|a_{n}|} \left (1+\sum_{j=1}^{\infty}\left|\Mprod_{k=0}^{j-1}\frac{b_{n+k}}{a_{n+k+1}}\right|^p\right)^{1/p}=\infty.
\]

\item[(iii)] The adjoint operator $F_w^*$ is always supercyclic.
\item[(iv)] If the conditions in \eqref{sup-limsup} are satisfied, then $F_{w}^{*}$ is hypercyclic on $(\ell^{p}_{a,b})^{*}$ if and only if 
         \begin{equation}\label{iff hyper}
               \liminf_{n\rightarrow \infty} \left| \frac{a_{\nu+n }}{\prod_{k=0}^{n-1} w_{\nu+k}}\right|=0,~~ \forall~~ \nu\in \mathbb{N}.
        \end{equation}
         \end{enumerate}
\end{theorem}
\begin{proof}  (i) First suppose that (\ref{hypercyclic3}) holds, and we will use Gethner-Shapiro criterion to show that $F_{w}^{*}$ is hypercyclic. Consider $D:=\text{span}\{k_{\nu}:\nu \geq 0\}$. Then $D$ is dense in $(\ell^{p}_{a,b})^{*},$ by Proposition \ref{c-f}. Define $S:D\rightarrow D$ by 
\[
S(k_{\nu})=\frac{1}{w_{\nu}}k_{\nu+1},
\]
$\forall~~ \nu\geq 0.$ Since $F_{w}^{*}S(f)=f$ and $(F_{w}^{*})^{n}f\to 0,$ as $n \to \infty,$  $\forall$ $f\in D,$ we just need to show that there exists $(n_k)$ such that $S^{n_{k}}(k_{\nu})$ tends to $0$, as $k \to \infty,$ for $\nu\geq 0$. Indeed, Proposition \ref{k_{n}} along with our assumption in (i) gives that, for some $(n_k)$,
       \[
       \lVert S^{n_{k}}(k_{\nu}) \rVert=\left\lVert\frac{k_{\nu+n_{k} }}{w_{\nu}w_{\nu+1}\cdots w_{\nu+n_{k}-1}} \right\rVert\leq M_{1}\frac{|a_{\nu+n_{k}|}+|b_{\nu+n_k-1}|}{|w_{\nu}w_{\nu+1}\cdots w_{\nu+n_{k}-1}|} \longrightarrow 0,
       \]
       as $k\rightarrow \infty$, where $M_{1}$ is a constant. Thus, $ F_{w}^{*}$ satisfies the Gethner-Shapiro criterion. 
       
       (ii) Conversely, if  $F_{w}^{*}$ is hypercyclic on $(\ell^{p}_{a,b})^{*}$, then by a result of Bonet \cite{Bonet1}, we have 
       \[
       \sup_{n\geq 1} \|w_0w_1\cdots w_{n-1}z^n\|=\sup_{n\geq 1} \|F_w^n(1)\|=\infty.
       \]
       By the norm expression \eqref{m}, we get the result in (ii).

       (iii) For proving the supercyclicity of $F_w^*$, we recall a form of the supercyclicity criterion: For a bounded operator $T$ defined on a separable Banach space $X$, if there exist a dense set $D$ in $X$ and a map $S:D\rightarrow D$ such that $TSx=x$ and $\|T^nx\|\|S^ny\|\rightarrow 0$, as $n\rightarrow \infty$, for all $x,y\in D$, then $T$ is supercyclic. We refer to \cite{Bayart-Matheron}, p. 9, for a general form of this criterion.

       Now, apply the supercyclicity criterion with $D$ and $S$ as in (i), taking $n_k=k$, for $k\geq 1$. By Proposition \ref{uni-shift}, for each $\nu\geq 0$ the sequence $F_w^{*n}(k_{\nu})$ vanishes for large $n$, and hence $F_w^*$ satisfies the supercyclicity criterion.

       (iv) By the assumptions in \eqref{sup-limsup}, $F_w$ becomes a bounded operator. Also, one can find $r<1$ and $N\in \mathbb{N}$ such that $\big| b_n/a_{n+1}\big|<r$ for every $n\geq N$. Hence, the conditions that appear in (i) and (ii) are easily seen to be equivalent to \eqref{iff hyper}. 

\end{proof}

 For a special case, we refer to \cite{Mundayadan-Sarkar} where the authors provided some results on the dynamics of $F^*$ in tridiagonal spaces. Our next result deals with the chaos of $F_w^*$. In order to apply the chaoticity criterion, we need a representation of $f_n^*$ in terms of $k_n$. 

\begin{lemma}\label{rep-unilateral}
     Let $k_n$ denote the coordinate functional $f\mapsto \frac{f^{(n)}(0)}{n!},$ defined on $\ell^p_{a,b}$ or $c_{0,a,b}$, where $n\geq 0$. Then we have
      \[
        f_{n}^{*}= \frac{1}{a_{n}}k_{n}+\sum_{j=1}^{n}(-1)^{j}\frac{b_{n-1}b_{n-2}\cdots b_{n-j}}{a_{n}a_{n-1}\cdots a_{n-j}}k_{n-j},
       \]
       for all $n\geq1$ and $f_{0}^{*}=\frac{1}{a_{0}}k_{0}.$
\end{lemma}

\begin{proof}
   If $f \in \ell^{p}_{a,b},$ then $f(z)=\sum_{n=0}^{\infty} \lambda_{n}f_{n}(z)=\sum_{n=0}^{\infty}\lambda_{n}(a_{n}+b_{n}z)z^{n}.$ Arranging into a power series, we note that $f(z)=\lambda_0a_0+\sum_{n\geq 1} (\lambda_na_n+\lambda_{n-1}b_{n-1})z^n,$ 
 and so, $k_n(f)=\lambda_na_n+\lambda_{n-1}b_{n-1}$ and $k_{0}(f)=\lambda_{0}a_{0}.$ Thus, we have
 \begin{eqnarray*}
     \frac{1}{a_{n}}k_{n}(f)-\cdots +(-1)^{n}\frac{b_{n-1}\cdots b_{0}}{a_{n}\cdots a_{0}}k_{0}(f)&=&\frac{1}{a_{n}}(\lambda_na_n+\lambda_{n-1}b_{n-1})-\cdots+(-1)^{n}\frac{b_{n-1}\cdots b_{0}}{a_{n}\cdots a_{0}}(\lambda_{0}a_{0})\\
    &=& \lambda_{n}\\
     &=&  f_{n}^{*}(f),
 \end{eqnarray*}
 for all $n\geq 1$. Moreover, $\frac{1}{a_{0}}k_{0}(f)=\frac{1}{a_{0}}(\lambda_{0}a_{0})=\lambda_{0}=f_{0}^{*}(f).$
 \end{proof}

We now obtain necessary and sufficient conditions for $F_w^*$ to be chaotic on the dual of $\ell^p_{a,b}$, stated below. We recall that in \cite{Mundayadan-Sarkar} the authors provided a sufficient condition for the adjoint $F^*$ of the unweighted shift $F$ to be chaotic on a general analytic reproducing kernel space over $\mathbb{D}$ which includes the case of $\ell^2_{a,b}$. 

\begin{theorem}\label{unilateral1}
     Assume that the conditions stated in \eqref{sup-limsup} are satisfied for $\ell^p_{a,b}$, where $1<p<\infty$. Consider the following statements:
     \begin{enumerate}
\item[(i)]$F_{w}^{*}$ is chaotic on $(\ell^{p}_{a,b})^{*}.$
\item[(ii)] $F_{w}^{*}$   has a non-trivial periodic vector.
\item[(iii)] $\sum_{n=1}^{\infty}\left\lvert \frac{a_{n}}{w_{0}w_{1}\cdots w_{n-1}} \right\rvert^{q} < \infty.$ 
\end{enumerate}

Then $(iii)\Rightarrow (i)\Rightarrow (ii)$. In addition, if $\frac{1}{p}+\frac{1}{q}=1$ and
\[
       \sum_{n\geq0}\left(\sum_{j\geq 1}\left\lvert \Mprod_{k=1}^{j}\frac{b_{n+k-1}}{ a_{n+k}}\right\rvert^{p}\right)^{\frac{q}{p}}<\infty,
      \]
      then $(ii)\Rightarrow (iii)$.
\end{theorem}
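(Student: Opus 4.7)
My plan is to establish the two nontrivial implications $(iii)\Rightarrow(i)$ and $(ii)\Rightarrow(iii)$; the middle link $(i)\Rightarrow(ii)$ is immediate from the definition of chaos. Throughout I would work in $(\ell^p_{a,b})^*$, which under (\ref{sup-limsup}) is reflexive (since $\ell^p_{a,b}$ is isomorphic to $\ell^p$ via the basis $\{f_n\}$) and has the unconditional basis $\{f_n^*\}$ equivalent to the standard $\ell^q$-basis (Proposition \ref{k_{n}}(ii)), while $F_w^*$ acts as a standard weighted backward shift on the total set $\{k_n\}$ by Proposition \ref{uni-shift}. This interplay between the two coordinate systems on $(\ell^p_{a,b})^*$ is the unifying technical device.

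For $(iii)\Rightarrow(i)$, I would invoke the Chaoticity Criterion (Theorem \ref{chaos}) with $D:=\mathrm{span}\{k_\nu:\nu\geq 0\}$, which is norm-dense by Proposition \ref{c-f} together with reflexivity, and with $S:D\to D$ defined by $S(k_\nu):=k_{\nu+1}/w_\nu$. Then $F_w^*S=I$ on $D$, and the forward sum $\sum_n(F_w^*)^n k_\nu$ is a finite sum by Proposition \ref{uni-shift}; for the backward sum, Proposition \ref{k_{n}}(iii) gives
\[
S^n k_\nu=\frac{a_{\nu+n}f_{\nu+n}^*+b_{\nu+n-1}f_{\nu+n-1}^*}{w_\nu\cdots w_{\nu+n-1}}.
\]
Unconditional convergence in $(\ell^p_{a,b})^*\cong \ell^q$ then reduces to the $\ell^q$-summability of the two coefficient sequences with respect to $\{f_m^*\}$: the $a$-piece equals a constant multiple of the series in (iii), while the second half of (\ref{sup-limsup}) yields $|b_m|\leq r|a_{m+1}|$ eventually for some $r<1$, bounding the $b$-piece by a shift of the $a$-piece.

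For $(ii)\Rightarrow(iii)$, I would first reduce to an eigenvector. The extra hypothesis combined with (\ref{m}) places every monomial $z^n$ in $\ell^p_{a,b}$, and the standard algebraic argument of applying $\prod_{\mu^m=1,\,\mu\neq\lambda}(F_w^*-\mu I)$ to a given non-trivial $m$-periodic vector produces a non-trivial eigenvector $\Phi$ with $F_w^*\Phi=\lambda\Phi$, $|\lambda|=1$. Choose $m_0$ with $\Phi(z^{m_0})\neq 0$ (which exists since $\Phi\neq 0$ and the span of monomials is dense). Iterating the eigenvalue identity using $F_w(z^k)=w_k z^{k+1}$ gives $|\Phi(z^n)|=|\Phi(z^{m_0})|/|w_{m_0}\cdots w_{n-1}|$ for $n\geq m_0$. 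Writing $\Phi=\sum\phi_n f_n^*$ with $(\phi_n)\in\ell^q$ and applying $\Phi$ to the expansion (\ref{monomial-expansion}) of $z^n$ yields
\[
a_n\Phi(z^n)=\phi_n+R_n,\qquad R_n:=\sum_{j\geq 1}(-1)^j\Mprod_{k=0}^{j-1}\frac{b_{n+k}}{a_{n+k+1}}\phi_{n+j}.
\]
The triangle inequality turns this into a bound $|a_n|/|w_{m_0}\cdots w_{n-1}|\leq C(|\phi_n|+|R_n|)$, and H\"older's inequality with exponents $p,q$ gives $|R_n|^q\leq T_n^{q/p}\|\phi\|_q^q$, where $T_n$ is the inner sum in the extra hypothesis. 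Summing in $n$ and absorbing the constant $|w_0\cdots w_{m_0-1}|^q$ yields (iii).

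The step I expect to be the main obstacle is $(ii)\Rightarrow(iii)$: one has to produce a genuine eigenvector from a general periodic vector, select a correct test monomial $z^{m_0}$, and then set up the H\"older estimate so that the extra summability hypothesis is \emph{exactly} what controls the ``off-diagonal" correction $R_n$, with matching $p$ and $q$ exponents. The implication $(iii)\Rightarrow(i)$, by contrast, is a fairly clean invocation of the Chaoticity Criterion once the change of coordinates between $\{k_n\}$ and $\{f_n^*\}$ is in place.
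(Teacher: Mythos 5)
Your proposal is correct, and while $(iii)\Rightarrow(i)$ follows the paper's route, your $(ii)\Rightarrow(iii)$ is genuinely different in presentation and, in one respect, cleaner. For $(iii)\Rightarrow(i)$ both arguments run the Chaoticity Criterion with $D=\mathrm{span}\{k_\nu\}$ and $S(k_\nu)=k_{\nu+1}/w_\nu$; the only divergence is in verifying unconditional convergence of $\sum_n S^nk_\nu$: the paper invokes Orlicz--Pettis and checks weak unconditional convergence by pairing with $f\in\ell^p_{a,b}$, whereas you read off the $f_m^*$-coordinates via Proposition \ref{k_{n}}(iii) and use that $\{f_m^*\}$ is an unconditional basis equivalent to the $\ell^q$-basis, splitting into the $a$-piece (exactly (iii)) and a $b$-piece dominated by $r$ times a shift of it. Both are fine; yours avoids the duality detour. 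For $(ii)\Rightarrow(iii)$ the paper keeps the periodic vector $\psi$, formally rewrites it as $\sum_m A_mk_m$ using Proposition \ref{rep-unilateral}, applies $(F_w^*)^{n\nu}$ via the shift action on $\{k_m\}$, and equates the coefficient of $k_0$ (and then of $k_1,\dots,k_{\nu-1}$); you instead first extract a genuine eigenvector $\Phi$ with unimodular eigenvalue via the spectral projection $\prod_{\mu^m=1,\mu\neq\lambda}(F_w^*-\mu I)$ and evaluate the eigenvalue identity on monomials. These are the same computation in disguise --- the paper's $A_m$ is precisely $\psi(z^m)$ computed through \eqref{monomial-expansion} --- but your version buys two things: the recursion $|\Phi(z^n)|=|\Phi(z^{m_0})|/|w_{m_0}\cdots w_{n-1}|$ is immediate rather than hidden in coefficient-matching of a rearranged series whose convergence needs the extra hypothesis, and your choice of $m_0$ with $\Phi(z^{m_0})\neq 0$ (legitimate since the $f_n$ are polynomials, so monomials span a dense set) transparently guarantees the nonvanishing constant that the paper's ``for some $C>0$'' only addresses implicitly by cycling through $k_0,\dots,k_{\nu-1}$. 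The H\"older step with exponents $(p,q)$ on the correction term $R_n$ matches the paper's use of the extra summability hypothesis exactly. The only points worth flagging are housekeeping: the argument tacitly needs all $w_k\neq 0$ (as does the statement of (iii) itself), and your tail estimate starts at $n=m_0$, which suffices since the initial finitely many terms of (iii) are finite.
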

\begin{proof}
    Clearly, \text{(i)} gives \text{(ii)}.
    
   To see that \text{(ii)} implies \text{(iii)}, let $\psi=\sum_{m=0}^{\infty}\lambda_{m}f_{m}^{*}$ be a non-zero periodic vector for $F_{w}^{*}$ on $(\ell^{p}_{a,b})^{*}$, where $f_m^{*}$, $m\geq 0$, form an unconditional Schauder basis for $(\ell^{p}_{a,b})^{*}$. Now, using Lemma \ref{rep-unilateral}, we get
       \[
       \psi=\sum_{m=0}^{\infty} A_{m}k_{m},~~~~\text{where}~~~A_{m}=\lambda_{m}\frac{1}{a_{m}}+\sum_{j=1}^{\infty}(-1)^{j}\lambda_{m+j}\frac{b_{m}b_{m+1}\cdots b_{m+j-1}}{a_{m}a_{m+1}\cdots a_{m+j}},
       \]
       for all $m\geq 0.$ Now, let $ \nu \in \mathbb{N}$ be such that $(F_{w}^{*})^{\nu}\psi=\psi$. Then $(F_{w}^{*})^{n\nu}\psi=\psi$ for all $n\geq 1$, which imply that
\begin{center}
    $\displaystyle{\sum_{m=n\nu}^{\infty}}w_{m-1}w_{m-2}\cdots w_{m-n\nu }A_{m}k_{m-n\nu}=\sum_{m=0}^{\infty}A_{m}k_{m}. $
\end{center}
       Equating the coefficient of $k_{0}$ both sides, we get
       \[
       \frac{w_{n\nu-1} w_{n\nu-2}\cdots w_{0}}{a_{n\nu}}\left( \lambda_{n\nu} +\sum_{j=1}^{\infty}(-1)^{j}\lambda_{n\nu+j}\frac{b_{n\nu}\cdots b_{n\nu+j-1}}{a_{n\nu+1}\cdots a_{n\nu+j}} \right)= \frac{1}{a_{0}}\left( \lambda_{0}+\sum_{j=1}^{\infty}(-1)^{j}\lambda_{j}\frac{b_{0}\cdots b_{j-1}}{a_{1}\cdots a_{j}}\right).
       \]
       Since the series in the right hand side converges, we get that
       \[
       C \sum_{n=1}^{\infty} \left\lvert \frac{a_{n\nu}}{w_{n\nu-1} w_{n\nu-2}\cdots w_{0}} \right\rvert^{q}= \left\lvert a_{0}  \right\rvert^{q}\sum_{n=1}^{\infty} \left\lvert \lambda_{n\nu} +\sum_{j=1}^{\infty}(-1)^{j}\lambda_{n\nu+j}\frac{b_{n\nu}\cdots b_{n\nu+j-1}}{a_{n\nu+1}\cdots a_{n\nu+j}} \right\rvert^{q},
       \]
       for some $C>0$. Note that using the H\"{o}lder inequality, we can get
       \[
       \sum_{n=1}^{\infty} \left\lvert\lambda_{n\nu} +\sum_{j=1}^{\infty}(-1)^{j}\lambda_{n\nu+j}\frac{b_{n\nu}\cdots b_{n\nu+j-1}}{a_{n\nu+1}\cdots a_{n\nu+j}}\right\rvert^{q}<\infty,
       \]
        as $\{\lambda_{n}\}_{n}\in \ell^{q}(\mathbb{N}_{0})$ and  $ \sum_{n\geq0}\left(\sum_{j\geq 1}\left\lvert \displaystyle\Mprod_{k=1}^{j}\frac{ b_{n+k-1}}{a_{n+k}}\right\rvert^{p}\right)^{\frac{q}{p}}<\infty.$ Therefore,\\ $\sum_{n=1}^{\infty}\left\lvert \frac{a_{n\nu}}{w_{n\nu-1} w_{n\nu-2}\cdots w_{0}} \right\rvert^{q}<\infty.$ Similarly, equating the coefficient of $k_{j}, j=1,2,\cdots,\nu-1$, and performing the calculations as above, we obtain that $\sum_{n=1}^{\infty}\left\lvert \frac {a_{n\nu+j}}{w_{n\nu+j-1} w_{n\nu+j-2}\cdots w_{j}}\right\rvert^{q}<\infty.$ We conclude that $\sum_{n=1}^{\infty}\left\lvert \frac{a_{n}}{w_{0}w_{1}\cdots w_{n-1}}\right\rvert^{q}<\infty.$

      Now, we prove that \text{(iii)} implies \text{(i)}, by applying the chaoticity criterion to show that $F_{w}^{*}$ is chaotic on $(\ell^{p}_{a,b})^{*}.$ Consider $D=\text{span}\{k_{n}:n \geq 0\}$ which is dense in $(\ell^{p}_{a,b})^{*}.$  Define $S:D\rightarrow D$ by $S(k_{n})=\frac{1}{w_{n}}k_{n+1},~~ \forall~~ n\geq 0.$ Clearly, $F_{w}^{*}S(f)=f,$ and the series $\sum_{n=1}^{\infty}(F_{w}^{*})^{n}(f)$ converges unconditionally for each $f \in D.$  We just need to show that $\sum_{n=1}^{\infty}S^{n}(f)$ converges unconditionally, for each $f\in D.$ By the well known Orlicz-Pettis theorem (cf. Diestel \cite{Diestel}) it suffices to see that the series is weakly unconditionally convergent, that is, 
      \begin{equation}\label{wuc}
      \sum_{n\geq 1}\left|\frac{k_n(f)}{w_{0}w_{1}\cdots w_{n-1}}\right|<\infty,~~f\in \ell^p_{a,b}.
      \end{equation}
       This can be verified as follows: we have $k_n(f)=\lambda_n a_n+\lambda_{n-1}b_{n-1}$, for $n\geq 1$ and $f=\sum_{n=0}^{\infty}\lambda_nf_n(z)\in \ell^p_{a,b}$.
As $\limsup_n |b_n/a_{n+1}|<1$, one gets $N\in\mathbb{N}$ and $r<1$ such that $|b_n/a_{n+1}|<r$ for all $n\geq N$. It follows that
\[
\left\lvert \frac{\lambda_na_{n}}{w_{0}w_{1}\cdots w_{n-1}}+\frac{\lambda_{n-1}a_{n}}{w_{0}w_{1}\cdots w_{n-1}}\frac{b_{n-1}}{a_{n}}  \right\rvert\leq \left\lvert\frac{a_{n}\lambda_n}{w_{0}w_{1}\cdots w_{n-1}} \right\rvert+r\left\lvert\frac{a_{n}\lambda_{n-1}}{w_{0}w_{1}\cdots w_{n-1}} \right\rvert,
\]
for $n\geq N$. Since $\left\{\frac {a_n} {w_0\cdots w_{n-1}}\right\}\in \ell^q(\mathbb{N}_{0})$ and $\{\lambda_n\}\in \ell^p(\mathbb{N}_{0})$, the H\"{o}lder inequality yields that the condition in \eqref{wuc} is satisfied. Our conclusion now, is that $F_{w}^{*}$ satisfies the chaoticity criterion, and \text{(i)} follows.
\end{proof}

The proofs of the hypercyclicity and chaos for $F_w^*$ show that, in terms of coefficient functionals, one has more general results on the dynamics of $F_w^*$ when $F_w$ is defined on a Banach space of analytic functions, stated as follows. Its proof is omitted.

\begin{theorem}\label{gen}
    Let $\mathcal{E}$ be a Banach space of analytic functions on the unit disc $\mathbb{D}$, having norm-separable dual $\mathcal{E}^*$. Assume that $\{k_n: n \geq 0\}$ is linearly independent, and that the operator $F_w$ is bounded on $\mathcal{E}$. Then the following  properties hold.
    \begin{itemize}
        \item[(i)] If $\inf_{n\geq 0} |\prod_{k=0}^{n-1} w_{k}|^{-1}\|k_{n}\|=0$, then $F_w^*$ is hypercyclic.
        \item[(ii)] If $\lim_{n\rightarrow \infty} |\prod_{k=0}^{n-1} w_{k}|^{-1}\|k_{n}\|=0$, then $F_w^*$ is mixing.
        \item[(iii)] If $\mathcal{E}$ is reflexive, and 
        \[
        \sum_{n\geq 1}\left|\left (\Mprod_{k=0}^{n-1} w_{k}\right)^{-1} \frac{f^{(n)}(0)}{{(n)}!}\right|<\infty,
        \]
        for all $f\in \mathcal{E}$, then $F_w^*$ is chaotic.
    \end{itemize}
\end{theorem}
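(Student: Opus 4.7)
The plan is to adapt the arguments from Theorems \ref{unilateral} and \ref{unilateral1} to this abstract setting, relying on the shift-like property $F_w^*(k_n)=w_{n-1}k_{n-1}$ (with $F_w^*(k_0)=0$) from Proposition \ref{uni-shift}, together with Proposition \ref{c-f}. In all three parts I would take $D:=\text{span}\{k_n:n\geq 0\}$ and define a right inverse $S:D\to D$ by $S(k_n):=w_n^{-1}k_{n+1}$, extended linearly. Then $F_w^*\circ S=I$ on $D$, and for each $f\in D$ the iterates $(F_w^*)^n f$ vanish for all sufficiently large $n$ since $F_w^*$ shifts backward and annihilates $k_0$. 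A direct computation yields
\[
\|S^n(k_j)\|=\frac{\|k_{j+n}\|}{|w_j w_{j+1}\cdots w_{j+n-1}|}=|w_0\cdots w_{j-1}|\cdot\frac{\|k_{j+n}\|}{|w_0\cdots w_{j+n-1}|},
\]
which is the key quantity to control.

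For (i), I would apply the Gethner-Shapiro criterion. The hypothesis $\inf_{n\geq 0}(|\prod_{k=0}^{n-1}w_k|^{-1}\|k_n\|)=0$, together with the linear independence of the $k_n$'s (which excludes the value $0$ at any single index), forces the liminf to be zero. Since shifting the index by a fixed $j$ does not change a liminf, the same equality holds with $n$ replaced by $j+n$, so for each $j\geq 0$ one has $\liminf_n \|S^n(k_j)\|=0$. A standard diagonal extraction then produces a single subsequence $(n_k)$ working simultaneously for every $j$, and Gethner-Shapiro delivers hypercyclicity. For (ii), the strengthened hypothesis gives $\lim_n\|S^n(k_j)\|=0$ without passing to a subsequence, for each $j$; applying Gethner-Shapiro with $n_k=k$ yields mixing.

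For (iii), I would invoke the chaoticity criterion. The sum $\sum_n (F_w^*)^n f$ is finite for $f\in D$ and hence unconditionally convergent. For $\sum_n S^n f$, by the Orlicz-Pettis theorem it suffices to check weak unconditional convergence, that is, $\sum_n |\langle S^n f,h\rangle|<\infty$ for every $h\in \mathcal{E}^{**}=\mathcal{E}$ (using reflexivity). For $f=\sum_{j=0}^N c_j k_j\in D$, one computes
\[
\langle S^n f,h\rangle=\sum_{j=0}^N c_j\cdot\frac{h^{(j+n)}(0)/(j+n)!}{w_j w_{j+1}\cdots w_{j+n-1}},
\]
and after reindexing $m=j+n$ and telescoping the weight product, the resulting double sum is bounded by a constant multiple of $\sum_{m\geq 1}|h^{(m)}(0)|/(m!\,|w_0\cdots w_{m-1}|)$, which is finite by hypothesis. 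The criterion then yields that $F_w^*$ is chaotic.

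The main obstacle I anticipate is ensuring norm density of $D$ in $\mathcal{E}^*$: Proposition \ref{c-f} only gives weak$^*$-density in general, whereas the Gethner-Shapiro and chaoticity criteria are statements in the norm topology. Reflexivity bridges the gap immediately in (iii), but in (i) and (ii) an additional argument is required, presumably exploiting the norm-separability of $\mathcal{E}^*$, or by restricting attention to the norm closure of $D$, which is invariant under both $F_w^*$ and $S$. The diagonal extraction in (i) is routine but deserves careful bookkeeping.
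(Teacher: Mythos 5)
Your route---$D=\operatorname{span}\{k_n\}$, $S(k_n)=w_n^{-1}k_{n+1}$, Gethner--Shapiro for (i)--(ii), and the chaoticity criterion plus Orlicz--Pettis for (iii)---is exactly the one the paper intends (the proof is omitted there with a pointer back to Theorems \ref{unilateral} and \ref{unilateral1}), and your (ii) and (iii) are sound. In (i), however, the claimed ``standard diagonal extraction'' is a genuine gap. Set $\beta_m:=\|k_m\|/|w_0\cdots w_{m-1}|$; you correctly deduce $\liminf_m\beta_m=0$ and hence, for each fixed $j$, a sequence along which $\|S^{n}(k_j)\|=|w_0\cdots w_{j-1}|\,\beta_{j+n}\to 0$. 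But Gethner--Shapiro needs a single increasing $(n_k)$ with $\beta_{j+n_k}\to 0$ for \emph{every} $j$ simultaneously, i.e.\ arbitrarily long runs of consecutive indices where $\beta$ is small; this does not follow from $\inf_m\beta_m=0$ (e.g.\ $\beta$ could be small only at isolated indices, and the sets $\{m:\beta_{j+m}<\varepsilon\}$ for different $j$ need not share an infinite subsequence), so no diagonal argument can produce $(n_k)$. The standard repair is the one used for Salas-type theorems: verify transitivity directly. For $f,g\in\operatorname{span}\{k_0,\dots,k_N\}$ and $(m_k)$ with $\beta_{m_k}\to 0$, put $h_k:=f+S^{m_k-N}g$; since
\[
S^{m_k-N}(k_j)=w_0\cdots w_{j-1}\,(F_w^*)^{N-j}\!\left(\frac{k_{m_k}}{w_0\cdots w_{m_k-1}}\right),\qquad 0\le j\le N,
\]
the continuity of $F_w^*$ gives $\|S^{m_k-N}(k_j)\|\le |w_0\cdots w_{j-1}|\,\|F_w^*\|^{N-j}\beta_{m_k}\to 0$, so $h_k\to f$, while $(F_w^*)^{m_k-N}h_k=g$ once $m_k>2N$. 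This uses only $\inf_m\beta_m=0$ and rescues (i); alternatively one must pass from Gethner--Shapiro to the sequential hypercyclicity criterion with return times depending on the support of $g$.

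The density issue you flag for (i) and (ii) is also real and your proposed workarounds do not close it: Proposition \ref{c-f} yields only weak$^*$-density of $D$ in $\mathcal{E}^*$, norm-separability of $\mathcal{E}^*$ does not upgrade a weak$^*$-dense subspace to a norm-dense one, and restricting to the norm closure $\overline{D}$ would only give hypercyclicity of $F_w^*|_{\overline{D}}$, not of $F_w^*$ on $\mathcal{E}^*$. So as written, (i) and (ii) need norm-density of $\operatorname{span}\{k_n\}$ as an additional hypothesis; this is automatic under reflexivity, which is why (iii) is unaffected and why the concrete instances in Theorems \ref{unilateral} and \ref{unilateral1} (where $1<p<\infty$) never meet the problem.
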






\begin{remark}
Recall that the dynamics of a weighted backward shift $B_w$ on $\ell^p_{a,b}$ was studied in \cite{Das-Mundayadan}. The chaos part was established via the unconditional convergence of the series $\sum_{n=1}^{\infty}\frac{1}{w_1\cdots w_n}z^n$ (see Theorem $4.7$ there). We would like to provide, here, a correction to the argument involving the unconditional convergence of the above series along with additional assumptions. For this we need an identification of the dual $\ell^p_{a,b}$ with $\ell^q(\mathbb{N}_0)$, where $1\leq p<\infty$, and $1/p+1/q=1$. We consider the case $1<p<\infty$ only, and note that each $y=(y_j)_{j=0}^{\infty}\in \ell^{q}(\mathbb{N}_0)$ defines a bounded functional $L_y$ on $\ell^p_{a,b}$ in the following way: $L_y(\sum_{j\geq 0} \lambda_j f_j(z)):=\sum_{j\geq 0}\lambda_j y_j$. Conversely, if $L\in (\ell^p_{a,b})^*$, then there exists a unique $y\in \ell^q(\mathbb{N}_0)$ such that $L=L_y$. To see the later, let $T:\ell^p(\mathbb{N}_0)\rightarrow \ell^p_{a,b}$ be the linear isomorphism given by $T(\lambda_0,\lambda_1,\cdots)=\sum_{j\geq 0} \lambda_j f_j$. The composition $LT$ defines a continuous functional on $\ell^p(\mathbb{N}_0)$, and one gets a unique 
$y\in \ell^q(\mathbb{N}_0)$ such that $LT(\lambda_0,\lambda_1,\cdots)=\sum_{j=0}^{\infty} \lambda_j y_j$, whence $L=L_y$.

We now prove the following. For $1<p<\infty$, if $\ell^p_{a,b}$ contains all the polynomials, 
\[
\sum_{n=1}^{\infty}\frac{1}{|w_{1}\cdots w_{n}a_{n}|^{p}}<\infty\hspace{1cm}\text{and}\hspace{1cm} \sum_{n\geq 1}\left(\sum_{j\geq 0}\left\lvert \Mprod_{k=1}^{j}\frac{b_{n+k-1}}{ a_{n+k}}\right\rvert^{p}\right)^{\frac{q}{p}}<\infty,
\]
then $B_w$ is chaotic. It suffices to show that $\sum_{n=1}^{\infty}\frac{1}{w_{1}\cdots w_{n}}z^{n}$ is weakly unconditionally convergent. If $L\in (\ell^{p}_{a,b})^{*},$ then there exists $\{y_{n}\}_{n=0}^{\infty}\in \ell^{q}(\mathbb{N}_{0})$ such that $L\left(\sum_{n=0}^{\infty}\lambda_{n}f_{n}(z)\right)=\sum_{n=0}^{\infty}\lambda_{n}y_{n}.$ So,
\[
L(z^{n})=\frac{1}{a_{n}}\sum_{j=0}^{\infty}\lambda_{n,j}L(f_{n+j})=\frac{1}{a_{n}}\sum_{j=0}^{\infty}\lambda_{n,j}y_{n+j}.
\]
Hence,
\begin{eqnarray*}
    \sum_{n=1}^{\infty}\left\lvert\frac{1}{w_1\cdots w_n}L(z^n)\right\rvert&\leq& \sum_{n=1}^{\infty}\frac{1}{|w_{1}\cdots w_{n}a_{n}|}\left\lvert\sum_{j=0}^{\infty}\lambda_{n,j}y_{n+j}\right\rvert\\
    &\leq&\left(\sum_{n=1}^{\infty}\frac{1}{|w_{1}\cdots w_{n}a_{n}|^{p}}\right)^{\frac{1}{p}}\left(\sum_{n=1}^{\infty}\left(\sum_{j=0}^{\infty}|\lambda_{n,j}|^p\right)^{\frac{q}{p}}\right)^{\frac{1}{q}},
\end{eqnarray*}
which gives the desired result. By a suitable modification of the preceding argument, a similar result can be obtained for $p=1$.

\end{remark}

     We close this section with results on compact perturbations, essential spectrum, and hypercyclic subspaces for $F_w^*$ when $F_w$ is defined on $\ell^p_{a,b}$. Recall that the essential spectrum of a unilateral shift on $\ell^2(\mathbb{N}_{0})$ was computed in Shields \cite{Shields}. The same proof works for $\ell^p(\mathbb{N}_{0})$, $1<p<\infty$.
     
     \begin{proposition}\label{perturbation} 
         Suppose that the weighted shift $F_w$ is bounded on $\ell^p_{a,b}$, where $1<p<\infty$. If $
c_{n}:=w_{n+1}\frac{b_{n}}{a_{n+2}}-w_{n}\frac{a_n}{a_{n+1}}\frac{b_{n+1}}{a_{n+2}}\rightarrow 0$, as $n\rightarrow \infty$, then $F_w$ is similar to a perturbation $F_{\alpha}+K$ for some compact operator $K$ and a weighted forward shift $F_{\alpha}$ defined on $\ell^p(\mathbb{N}_{0})$, having weights $\alpha_i= \frac{w_ia_i}{a_{i+1}}$, and hence $\sigma_e(F_w)=\sigma_e(F_{\alpha})$.
     \end{proposition}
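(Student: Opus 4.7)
The plan is to exploit the matrix decomposition $[F_w]=[F_\alpha]+\sum_{n\geq 2}[F_n]$ that was already written down right before Theorem \ref{decomposition}, transfer the whole picture to $\ell^p(\mathbb{N}_0)$ via the basis equivalence, and then show that under the assumption $c_n\to 0$ each subdiagonal piece is compact and the full tail is a norm-convergent sum of compacts.

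First I would set up the similarity. The hypothesis that $\{f_n\}$ is a Schauder basis of $\ell^p_{a,b}$ equivalent to the standard basis of $\ell^p(\mathbb{N}_0)$ is exactly the statement that the linear map $T:\ell^p(\mathbb{N}_0)\to\ell^p_{a,b}$ defined by $Te_j=f_j$ is a Banach space isomorphism. Let $\tilde F_w:=T^{-1}F_wT$. By construction the matrix of $\tilde F_w$ with respect to the standard basis $\{e_j\}$ is precisely $[F_w]$ displayed before Theorem \ref{decomposition}, so $\tilde F_w=F_\alpha+K$ where $F_\alpha$ is the weighted forward shift on $\ell^p(\mathbb{N}_0)$ with $\alpha_i=w_ia_i/a_{i+1}$ and $K=\sum_{n\geq 2}F_n$ with each $F_n$ being a pure $n$-step shift $e_j\mapsto \gamma_{n,j}e_{j+n}$, where $\gamma_{2,j}=c_j$ and for $n\geq 3$,
\[
\gamma_{n,j}=(-1)^{n-2}c_j\Mprod_{k=0}^{n-3}\frac{b_{j+2+k}}{a_{j+3+k}}.
\]

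Next I would show that each $F_n$ ($n\geq 2$) is compact. A weighted forward shift of any fixed order on $\ell^p(\mathbb{N}_0)$ is compact exactly when its weight sequence tends to zero. For $F_2$ this is immediate from the hypothesis $c_j\to 0$. For $n\geq 3$, the factor $\prod_{k=0}^{n-3}|b_{j+2+k}/a_{j+3+k}|$ stays bounded in $j$ (this is built into the boundedness of $F_w$; under the cleaner sufficient condition \eqref{sup-limsup} it is bounded by $L^{n-2}$ for some $L<1$ eventually), so $|\gamma_{n,j}|\leq M_n|c_j|\to 0$ as $j\to\infty$, and $F_n$ is compact.

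For the main step, I would argue that $K=\sum_{n\geq 2}F_n$ is a norm-convergent series of compact operators, hence compact. Since $\|F_n\|=\sup_j|\gamma_{n,j}|$, the summability $\sum_{n\geq 2}\|F_n\|<\infty$ is exactly the content of the hypotheses $\beta_1,\beta_2<\infty$ that guarantee boundedness of $F_w$ in Theorem \ref{decomposition} (so $\|F_2\|\leq\beta_1$ and $\sum_{n\geq 3}\|F_n\|=\beta_2$). Therefore $K$ is a norm limit of finite partial sums of compact operators, so $K$ is compact in the operator-norm closure, and $\tilde F_w=F_\alpha+K$. The main obstacle, such as it is, is to handle the double convergence cleanly: each fixed $F_n$ needs $c_j\to 0$ (the hypothesis) for compactness, while the assembly of the $F_n$'s into $K$ needs the norm summability coming from the boundedness assumption. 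Both pieces are present.

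Finally, since similarity and compact perturbations leave the essential spectrum invariant,
\[
\sigma_e(F_w)=\sigma_e(\tilde F_w)=\sigma_e(F_\alpha+K)=\sigma_e(F_\alpha),
\]
which gives the second assertion.
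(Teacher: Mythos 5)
Your proof follows the paper's argument essentially verbatim: the similarity via the basis-equivalence isomorphism, the subdiagonal decomposition $[F_w]=[F_\alpha]+\sum_{n\geq 2}[F_n]$, compactness of each $F_n$ from $c_n\to 0$, and invariance of the essential spectrum under similarity and compact perturbation. You are in fact slightly more careful than the paper, which infers compactness of $K=\sum_{n\geq 2}F_n$ directly from that of each $F_n$ without noting, as you do, that one also needs operator-norm convergence of the series (supplied by the summability condition $\beta_2<\infty$ from Theorem \ref{decomposition}).
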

     \begin{proof}
         Recall from the matrix representation of $F_w$ on $\ell^p_{a,b}$, it follows that $F_w$ is similar to an operator of the form $K+F_{\alpha}$ for some bounded operator $K$ and a weighted forward shift $F_{\alpha}$ on $\ell^p(\mathbb{N}_{0})$. Here, $K=\sum_{n\geq 2} F_n$, where $F_n$ is as in the proof of Theorem \ref{decomposition}. Since $c_n$ is a null sequence, each $F_n$ becomes compact, and so $K$ is compact. As the essential spectrum is invariant under compact perturbations, we also have $\sigma_e(F_w)=\sigma_e(F_{\alpha})$.
     \end{proof}
     \begin{corollary}
Suppose that $F_w^*$ is hypercyclic on the dual $(\ell^p_{a,b})^*$, where $1<p<\infty$, and $1/p+1/q=1$. Under the assumptions as in the previous proposition, the adjoint $F_w^*$ possesses a hypercyclic subspace if and only if $F_{\alpha}^*$ has a hypercyclic subspace in $\ell^q(\mathbb{N}_{0})$.
     \end{corollary}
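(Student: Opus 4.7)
The plan is to reduce the corollary to a known invariance principle for hypercyclic subspaces under compact perturbations, coupled with an essential-spectrum characterization. By Proposition \ref{perturbation}, $F_w$ is similar to $F_\alpha + K$ on $\ell^p(\mathbb{N}_0)$ for some compact operator $K$; passing to Banach adjoints (with $K^*$ compact by Schauder's theorem), one obtains that $F_w^*$ is similar to $F_\alpha^* + K^*$ as operators on $\ell^q(\mathbb{N}_0)$. Since the property of admitting a hypercyclic subspace is preserved under similarity, the corollary reduces to showing that $F_\alpha^* + K^*$ has a hypercyclic subspace if and only if $F_\alpha^*$ does.

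Next, I would verify that both $F_\alpha^*$ and $F_\alpha^* + K^*$ are hypercyclic and satisfy the hypercyclicity criterion, so that the known characterization applies. The operator $F_\alpha^* + K^*$ is hypercyclic by similarity to $F_w^*$, and the Gethner--Shapiro construction used in the proof of Theorem \ref{unilateral}(i) transports via the similarity to give the hypercyclicity criterion for $F_\alpha^* + K^*$. For $F_\alpha^*$, which is the weighted backward shift on $\ell^q(\mathbb{N}_0)$ with weights determined by $\alpha_i = w_i a_i / a_{i+1}$, one observes that $\prod_{k=0}^{n-1}|\alpha_k| = (|a_0|/|a_n|)\prod_{k=0}^{n-1}|w_k|$, so the hypercyclicity of $F_w^*$ (Theorem \ref{unilateral}(iv)) yields $\sup_n \prod_{k=0}^{n-1}|\alpha_k| = \infty$, which is precisely Salas's condition for $F_\alpha^*$ to be hypercyclic on $\ell^q(\mathbb{N}_0)$; and every hypercyclic unilateral weighted backward shift automatically satisfies the hypercyclicity criterion.

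Finally, I would invoke the characterization of hypercyclic subspaces via the essential spectrum, due to Gonz\'alez, Le\'on-Saavedra, and Montes-Rodr\'iguez: a hypercyclic operator $T$ on a separable Banach space satisfying the hypercyclicity criterion admits a hypercyclic subspace if and only if $\sigma_e(T) \cap \overline{\mathbb{D}} \neq \emptyset$. Since the essential spectrum is invariant under compact perturbation, $\sigma_e(F_\alpha^* + K^*) = \sigma_e(F_\alpha^*)$, and the two operators are hypercyclic subspace-equivalent. The main obstacle I anticipate is not the spectral bookkeeping itself but rather the uniform verification on both sides of the equivalence that the full hypercyclicity criterion holds (not merely the existence of a hypercyclic vector), since the Gonz\'alez--Le\'on--Montes characterization is sensitive to this hypothesis; the standing assumption \eqref{sup-limsup} enters at this point, as it is what allows one to pass between the shift-like condition on $F_w^*$ and Salas's condition on $F_\alpha^*$ without loss.
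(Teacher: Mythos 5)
Your route---pass to $F_\alpha^* + K^*$ via similarity and Schauder's theorem, then apply the Gonz\'alez--Le\'on-Saavedra--Montes-Rodr\'{\i}guez characterization of hypercyclic subspaces through the essential spectrum, together with the invariance of $\sigma_e$ under compact perturbation---is exactly the argument the paper intends: Proposition \ref{perturbation} establishes $\sigma_e(F_w)=\sigma_e(F_\alpha)$ and \cite{Gonzalez} is cited for precisely this purpose, and the paper supplies no explicit proof of the corollary. The reduction steps are sound: similarity preserves the existence of a hypercyclic subspace, $K^*$ is compact, and $\sigma_e(F_\alpha^*+K^*)=\sigma_e(F_\alpha^*)$.

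The one genuine gap is the verification of the Hypercyclicity Criterion on both sides. You correctly identify this as the crux, but your resolution leans on \eqref{sup-limsup}, which is not among the stated hypotheses: the corollary assumes only the hypotheses of Proposition \ref{perturbation} (boundedness of $F_w$ and $c_n\to 0$) plus hypercyclicity of $F_w^*$. Hypercyclicity alone does not give the Hypercyclicity Criterion, and your appeal to Theorem \ref{unilateral}(iv) is unavailable without \eqref{sup-limsup}; Theorem \ref{unilateral}(ii) yields only $\sup_n\|F_w^n(1)\|=\infty$, and since $\|F_w^n(1)\|\geq |\prod_{k=0}^{n-1}w_k|/|a_n|$ the inequality points the wrong way --- it does not imply Salas's condition $\sup_n\bigl|\prod_{k=0}^{n-1}\alpha_k\bigr|=\infty$ for $F_\alpha^*$ unless the tail factor $\bigl(1+\sum_{j\geq 1}|\prod_{k=0}^{j-1}b_{n+k}/a_{n+k+1}|^p\bigr)^{1/p}$ is uniformly bounded, which is exactly what \eqref{sup-limsup} buys. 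Under \eqref{sup-limsup} (arguably the intended standing hypothesis, as it is imposed in Theorems \ref{unilateral1} and \ref{T}) your proof is complete; as literally stated, however, the hypotheses do not guarantee that $F_\alpha^*$ is hypercyclic, nor that either operator satisfies the criterion, so you should either assume \eqref{sup-limsup} explicitly or supply a separate argument for those two facts.
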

     We illustrate the above results on the unilateral weighted shift $F_w$ with an example. We actually get a class of chaotic operators that are compact perturbations of weighted backward shifts on $\ell^p(\mathbb{N}_0)$.
     
 \begin{example}
    Choose $a_{0}=b_{0}=w_{0}=1,$ and $a_{n}=\frac{1}{n2^{n-1}},~ b_{n}=\frac{1}{(n+1)4^{n}},~ w_{n}=4, ~n\geq 1$. Then
    \[
\frac{1}{R}=\limsup_{n\rightarrow \infty} ~(|a_n|+|b_n|)^{1/n}=\limsup_{n\rightarrow \infty}\left(\frac{1}{n2^{n-1}}+\frac{1}{(n+1)2^{2n}}\right)^{1/n}=\frac{1}{2},
\]
 and $F_{w}$ is bounded on $\ell^p_{a,b},$ as it satisfies the conditions of Theorem \ref{decomposition}. Also, it should be noted that for the given values of $\{a_{n}\}_{n\geq 0}$ and $\{b_{n}\}_{n\geq 0},$ we have 
    \[
    w_{n+1}\frac{b_{n}}{a_{n+2}}-w_{n}\frac{a_n}{a_{n+1}}\frac{b_{n+1}}{a_{n+2}}=\frac{4}{2^{n}}\frac{n^{2}+2n-1}{n(n+1)}\to 0~~~~~~\text{as}~~~~~n\to \infty.
    \] 
Therefore, according to Proposition \ref{perturbation}, the operator $F_{w}$ on $\ell^{p}_{a,b}$ is similar to a compact perturbation of a weighted forward shift on $\ell^p(\mathbb{N}_{0}).$ Again, by the Theorem \ref{unilateral1}, the adjoint $F_{w}^{*}$ is  mixing and chaotic on $(\ell^p_{a,b})^{*},$ $1<p<\infty$.
\end{example}

\subsection{Notable differences: eigenvectors, periodic vectors, and a zero-one law}
We establish a couple of curious results which show that the forward weighted shifts $F_{w}$ on the space $\ell^p_{a,b}$ behave very differently from the classical unilateral weighted shifts. The adjoint of an unweighted forward shift operator $F$ can have a non-zero periodic vector without $F^{*}$ being hypercyclic, which is in sharp contrast to the case of the classical weighted shifts.  On another direction, a well known result states that if a unilateral or bilateral weighted backward shift on the corresponding $\ell^p$ spaces, has an orbit admitting a non-zero limit point, then the operator has a dense orbit, cf. Chan and Seceleanu \cite{Kit1}. This property is known as a zero-one law of orbital limit points for hypercyclicity, and we refer to Abakumov and Abbar \cite{Abakumov-Abbar},  Bonilla et al. \cite{Bonilla-Cardeccia}, and Chan and Seceleanu \cite{Kit} for more on this topic. Such a law does not hold for $F^*_w$ defined on the dual of $\ell^p_{a,b}$, as proved below. These interesting properties of $F^*$ will be based on a suitable extension of the domain of the map $z\mapsto ev_z$, where $ev_z$ denotes the evaluation functional at $z$. Recall that in \cite{Bonilla-Cardeccia} the authors have obtained a non-hypercyclic weighted backward shift on $c_0(\mathbb{Z})$, with an orbit admitting non-zero weak sequential limit points. (We say that a vector $u$ is a weak sequential limit point of a subset $A$ of a Banach space $X$ if there exists $\{u_n\}$ in $A$, which converges to $u$ weakly.)

We require the following necessary condition for non-hypercyclicity.

\begin{proposition}
Let $F$ be bounded on $\ell^p_{a,b}$, where $a_n=1$ for all $n\geq 0$, and let $1\in \ell^p_{a,b} $. If there exists $\mu \in \mathbb{C}$ such that
\[
\sup_{n\geq 0}\left(|\mu+b_{n}|^{p}\Big(1+\sum_{j=1}^{\infty}
\Big|\prod_{k=1}^{j} b_{n+k}\Big|^{p}\Big)\right)<\infty,
\]
then $F^*$ is not hypercyclic on $(\ell^p_{a,b})^{*}$.
\end{proposition}
\begin{proof}
     Note that, since $1\in \ell^p_{a,b}$, and $F$ is well defined, the space $\ell^p_{a,b}$ contains all polynomials by linearity. We show that $F$ has a bounded (non-zero) orbit. For this, let $f(z)$ be the function $1-\mu z$. Now, using the expansion of monomials with respect to the basis $\{f_n\}$ 
given in ~\eqref{monomial-expansion}, we obtain
\[
F^{n}(f)(z)=z^{n}-\mu z^{n+1}
= f_{n}-(\mu+b_{n})\left( f_{n+1}
+\sum_{j=1}^{\infty}(-1)^j\Big(\prod_{k=1}^{j} b_{n+k}\Big)
f_{n+j+1}\right).
\]
Consequently,
\[
\| F^{n}(f) \|^{p}_{\ell^{p}_{a,b}}
= 1+|\mu+b_{n}|^{p}\left(1 +\sum_{j=1}^{\infty}
\Big|\prod_{k=1}^{j} b_{n+k}\Big|^{p}\right).
\]
By our assumption, we have
\[
\sup_{n\geq 0} \, \| F^{n}(f) \|^{p}_{\ell^{p}_{a,b}} < \infty,
\]
from which it follows that $F^{*}$ is non-hypercyclic on $(\ell^{p}_{a,b})^{*}$.
\end{proof}
The space we consider below is based on Theorem $7$ in Adams and McGuire \cite{Adams-McGuire}, p. $734$, where the authors studied the unweighted shift on a tridiagonal kernel space.

\begin{theorem} \label{pr}
Let $|\lambda|=1$, $1<p<\infty$, and $1/p+1/q=1$. Let the forward shift $F$ be bounded on $\ell^p_{a,b}$, where
\begin{enumerate}
    \item[\textnormal{(A1)}] $a_{n}=1,~~~ 0<|b_n|<1,~~ n\geq 0~~~,\text{and}~~~\sum_{n\geq 0} |1+\lambda b_n|^q<\infty,$
\item[\textnormal{(A2)}]  $\sup_{n\geq 0}\left(|1+\lambda b_{n}|^{p}\left(\sum_{j=1}^{\infty}
\left|\prod_{k=1}^{j} b_{n+k}
\right|^{p}\right)\right)<\infty$,
\item[\textnormal{(A3)}] the radial limit $\lim_{z\rightarrow \lambda} f(z)$ exists and equals to $\sum_{n\geq 0}\lambda_n (1+b_n\lambda)\lambda^n$, for every function $f(z)=\sum_{n\geq 0}\lambda_n (1+b_nz)z^n$ in $\ell^p_{a,b}.$
\end{enumerate}
Then, the following hold.
\begin{enumerate}
\item[\textnormal{(i)}] $F^*$ is not hypercyclic.
\item[\textnormal{(ii)}]  The zero-one law of hypercyclicity fails for $F^*$.
\item[\textnormal{(iii)}] If $\lambda$ is a root of the unity, then $F^{*}$ has a non-trivial periodic vector in $(\ell^p_{a,b})^{*}$.
\end{enumerate}
\end{theorem}
\begin{proof}
Note that according to the assumption (A2), 
\[
\|z^{n}\|_{\ell^p_{a,b}}^{p}
=1+|b_{n}|^{p}\left(1 +\sum_{j=1}^{\infty}
\Big|\prod_{k=1}^{j} b_{n+k}\Big|^{p}\right)
<\infty.
\]
Hence, by~\eqref{m}, it follows that $z^{n}\in \ell^{p}_{a,b}$, for all $n\geq 0$. 

Using the previous proposition, for $\mu=\frac{1}{\lambda},$ we note that
\[
\sup_{n\geq 0}\left(|1+\lambda b_{n}|^{p}\Big(1+\sum_{j=1}^{\infty}
\Big|\prod_{k=1}^{j} b_{n+k}
\Big|^{p}\Big)\right)<\infty,
\]
because of the assumption $\textnormal{(A2)}.$ Therefore, $F^{*}$ is not hypercyclic on $(\ell^{p}_{a,b})^{*}$, and hence, (i) follows.\\
The  forward shift operator $F$ in $\ell^{p}_{a,b}$  is given by $F(f(z))=zf(z),$ for all $f \in \ell^{p}_{a,b}$ and $z \in \mathbb{D}.$ Now, since $F$ is bounded on $\ell^{p}_{a,b},$ for $f \in \ell^{p}_{a,b}$ and $z \in \mathbb{D},$ it can be checked that $F^{*}(ev_{z})=z ev_z$, i.e., $z$ is an eigenvalue of $F^{*}$ corresponding to the eigenvector $ev_{z}$ on $(\ell^{p}_{a,b})^{*},$ provided $ev_z$ is a non-zero functional. This fact is well known, cf. Godefroy and Shapiro \cite{Godefroy-Shapiro}. 

In our case, $a_{n}=1,$ $0<\lvert b_{n} \rvert<1,$ $n\geq 0,$ and $\lambda$ is unimodular. Now,
\begin{equation}\label{eevv}
\sum_{n=0}^{\infty} \lvert f_{n}(\lambda)\rvert^{q}= \sum_{n=0}^{\infty} |1+\lambda b_n|^q <\infty,
\end{equation}
for $1<q<\infty.$ Recall that if $f\in \ell^p_{a,b}$, then 
\begin{center}
$f(z)=\sum_{n\geq 0} \widehat{f}(n) f_n(z)$, and $\sum_{n\geq 0}|\widehat{f}(n)|^p<\infty$.
\end{center}
In view of \eqref{eevv}, we obtain that each $f(z)$ in $\ell^p_{a,b}$ extends to $z=\lambda$, and thus, it is meaningful to define 
\[
ev_{\lambda}(f)=f(\lambda):=\sum_{n\geq 0} \widehat{f}(n) f_n(\lambda),
\]
for all $f$. Since $|f(\lambda)|\leq M_{\lambda}\|f\|$ for some constant $M_{\lambda}>0$, we have that $ev_{\lambda} \in (\ell^{p}_{a,b})^{*}$. Note that $ev_{\lambda}$ is non-zero as, for example, $ev_{\lambda}(g)\neq 0$, where 
\[
g(z)=\sum_{n\geq 0} \frac{1}{\lambda^n 2^n} \frac{|1+\lambda b_n|}{1+\lambda b_n}f_n(z) \in \ell^p_{a,b}.
\]
 \noindent We claim that $F^{*}(ev_{\lambda})=\lambda(ev_{\lambda})$. To prove this claim, by (A3), we have that if $f\in \ell^p_{a,b},$ then the radial extension of $f(z)$ to $f(\lambda)$ is continuous, and thus $zf(z)\rightarrow \lambda f(\lambda)$ as $z\rightarrow \lambda$ radially. Hence, we have $zev_z\rightarrow \lambda ev_{\lambda}$ radially, pointwise. On the other hand, since $ev_z\rightarrow ev_{\lambda}$ as $z\rightarrow \lambda$ radially, we have the (radial) pointwise limit $F^*{ev_z}\rightarrow F^*(ev_{\lambda})$. Since $F^*{ev_z}=zev_z$ for $|z|<1$, the claim is proved.

If $\lambda$ is a root of unity, say $\lambda^m=1$, then $F^{*m}(ev_{\lambda})=ev_{\lambda}$. Hence, $F^{*}$ has a nontrivial periodic vector, namely $ev_{\lambda}$, which gives (iii).

To prove (ii), consider the orbit of $f:=ev_z+ev_{\lambda}$ under $F^*$, for any $z\in \mathbb{D}$. If $\lambda^j=1$, then $(F^*)^{jn}(f)\rightarrow ev_{\lambda}$, as $n\rightarrow \infty$. If $\lambda$ is not a root of unity, then we use the well known fact, due to Kronecker's density theorem, that for such a $\lambda$ the sequence $
\{\lambda^n:n\geq 1\}$ is dense in the unit circle of $\mathbb{C}$. In particular, we obtain a sequence $(j_n)$ such that $\lambda^{j_n}\rightarrow 1$, as $n\rightarrow \infty$. Consequently, we have 
\begin{center}
$(F^*)^{j_n}(f)\rightarrow ev_{\lambda}$,
\end{center}
as $n\rightarrow \infty$.
Thus, the zero-one law of orbital limit points fails for $F^*$.
\end{proof}

\begin{example}\label{example} Let $1<p<\infty$, and $|\lambda|=1$. Set
$
a_{n}=1 ~\text{and}~ b_{n}=\frac{-1}{\lambda} \frac{n+1}{n+2}, ~n\geq 0.$ Then, $\frac{1}{R}=\limsup_{n\rightarrow \infty}\left(1+\frac{n+1}{n+2}\right)^{1/n}=1,$
and 
\[
\|z^n\|_{\ell^p_{a,b}}^p:=\frac{1}{|a_{n}|^{p}}\left(1+
\sum_{j=1}^{\infty}
\left|\prod_{k=0}^{j-1}\frac{b_{n+k}}{a_{n+k+1}}
\right|^{p}\right)=1+(n+1)^p\sum_{j=2}^{\infty}\frac{1}{(n+j)^p}.
\]
By (\ref{m}), we have that $z^n \in \ell^p_{a,b}.$

We now observe that $F$ is bounded. Indeed, the matrix of $F$ with respect to the basis $\{f_n\}_{n=0}^{\infty}$ is given by
\begin{eqnarray*}
     [F]&:=&\begin{bmatrix} 0 & 0 & 0&0&0\cdots\\ 1 & 0 & 0&0&0\ddots\\ \frac{1}{\lambda}\frac{1}{2.3} & 1 & 0&0&0\ddots\\ \frac{1}{\lambda^{2}}\frac{1}{2.4} & \frac{1}{\lambda}\frac{1}{3.4} & 1&0&0\ddots\\ \frac{1}{\lambda^{3}}\frac{1}{2.5} &\frac{1}{\lambda^{2}}\frac{1}{3.5} & \frac{1}{\lambda}\frac{1}{4.5}&1&0\ddots\\ \vdots&\vdots &\ddots&\ddots&\ddots\ddots \end{bmatrix}.
\end{eqnarray*}
Hence, $F$ is the sum of the shift $S$ and some operator $K$ on $\ell^p(\mathbb{N}_0)$. Since the entries in the matrix of $K$ are $p$-absolutely summable, $p>1$, we get that $K$ is continuous. (For details, we refer to M. Koskela \cite{Koskela}.) When $p=2$, $K$ is in fact a Hilbert-Schdmidt operator since the entries in the matrix of $K$ are square summable. (Recall that an operator $T$ on a separable Hilbert space $H$ is Hilbert-Schmidt if $\sum_n \|Te_n\|^2<\infty$ for some orthonormal basis $\{e_n\}$ in $H$, cf. \cite{Halmos}.) 

Also, $F^*$ has the properties stated in (i), (ii), and (iii) of the above theorem. Note that $(A1)$ and $(A2)$ are satisfied. Indeed, a computation shows that
\[
\sum_{n\geq 0} |1+\lambda b_n|^q=\sum_{n\geq 0} \frac{1}{(n+2)^q},
\]
which is finite, and 
\[
 \sup_{n\geq 0}\left(|1+\lambda b_{n}|^{p}\Big(\sum_{j=1}^{\infty}
\Big|\prod_{k=1}^{j} b_{n+k}
\Big|^{p}\Big)\right)= \sup_{n\geq 0}~\sum_{j\geq 3}\frac{1}{(n+j)^{p}}<\infty.
\]
The requirement in (A3) holds because of the following argument. If
\[
f(z)=\sum_{n\geq 0}\lambda_n \Big(1-\frac{1}{\lambda}\frac{n+1}{n+2}z\Big)z^n
\]
is a member in $\ell^p_{a,b}$ with $\{\lambda_n\}\in \ell^p(\mathbb{N}_0)$, then this series converges (radially) to $f(\lambda)$, uniformly in some interval $(r_0,1]$.
\end{example}

In view of the above results, we have:

\begin{corollary}
There exists a space of the form $\ell^p_{a,b}$, $1<p<\infty$, such that the shift $F$ is bounded and non-similar to a weighted forward shift on $\ell^p(\mathbb{N}_0)$.
\end{corollary}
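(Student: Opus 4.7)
My plan is to take the space exhibited in the Example immediately preceding the corollary, with $\lambda$ chosen to be a non-trivial root of unity on the unit circle. Proposition~\ref{pr} then directly supplies the three facts I want to combine: $F$ is bounded on $\ell^p_{a,b}$; $F^*$ admits the non-trivial periodic vector $ev_\lambda$ (via the extension of the evaluation map to $z=\lambda$, which works because $\sum_n |1+\lambda b_n|^q = \sum_n (n+2)^{-q}<\infty$); and $F^*$ is not hypercyclic. Since possessing a non-trivial periodic vector and failing hypercyclicity are both similarity invariants, it suffices to show that no weighted forward shift on $\ell^p(\mathbb{N}_0)$ can have an adjoint that simultaneously exhibits both behaviours.

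The key input is the following dichotomy for weighted backward shifts on $\ell^q(\mathbb{N}_0)$, which I would isolate as a lemma inside the proof: if $B_\alpha$ has a non-trivial periodic vector, then $B_\alpha$ is hypercyclic. To prove it, I would solve the eigenvalue equation $B_\alpha v = \mu v$ coordinatewise to obtain $v_n = \mu^n v_0/(\alpha_0\alpha_1\cdots\alpha_{n-1})$, so that membership in $\ell^q(\mathbb{N}_0)$ for a unimodular $\mu$ amounts to $\sum_n |\alpha_0\cdots\alpha_{n-1}|^{-q}<\infty$, which in particular forces $|\alpha_0\cdots\alpha_{n-1}|\to\infty$. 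Since, by definition, a non-trivial periodic vector is a non-zero linear combination of eigenvectors at roots-of-unity eigenvalues, at least one such eigenvector must actually lie in $\ell^q(\mathbb{N}_0)$, delivering this summability. Salas's characterization of hypercyclic unilateral backward shifts then yields hypercyclicity of $B_\alpha$.

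To conclude, suppose for contradiction that $F$ is similar to some weighted forward shift $F_\alpha$ on $\ell^p(\mathbb{N}_0)$. Taking adjoints, $F^*$ is similar to $F_\alpha^* = B_\alpha$ acting on $\ell^q(\mathbb{N}_0)$, $1/p+1/q=1$. The non-trivial periodic vector of $F^*$ transfers through similarity to $B_\alpha$; by the dichotomy above, $B_\alpha$ is then hypercyclic, and similarity invariance forces $F^*$ to be hypercyclic as well, contradicting Proposition~\ref{pr}. The only mild obstacle is being precise about the two standard facts being used---the eigenvector formula together with the description of periodic vectors as combinations of roots-of-unity eigenvectors, and Salas's characterization---so that the chain of implications is airtight; neither ingredient is deep, and both are recalled in the paper's introduction.
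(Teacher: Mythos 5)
Your proposal is correct and follows essentially the route the paper intends: the corollary is stated "in view of the above results," i.e.\ Proposition~\ref{pr} and its Example produce a bounded $F$ whose adjoint has a non-trivial periodic vector yet fails to be hypercyclic, while for any weighted forward shift $F_\alpha$ on $\ell^p(\mathbb{N}_0)$ the adjoint is a weighted backward shift on $\ell^q(\mathbb{N}_0)$, for which a non-trivial periodic vector forces hypercyclicity (by the eigenvector formula and Salas's criterion), so similarity is impossible. Your explicit dichotomy lemma simply fills in the standard fact the paper leaves implicit, and the similarity-invariance bookkeeping is handled correctly.
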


 \begin{remark}
      By Example \ref{example}, there is a compact operator on $\ell^2(\mathbb{N}_{0})$ such that the perturbation $B+K$ has periodic vectors, but it is non-hypercyclic. Also, such a perturbation exists for which the zero-one law fails. We would like to mention that compact operators with these properties can be easily obtained, without appealing to Example \ref{example}. Indeed, for $|\lambda|=1$, consider the one-rank operator $K_{\lambda}$ on $\ell^p(\mathbb{N}_{0})$, defined by $K_{\lambda}(x):=\lambda x_0e_0$, for $x=(x_0,x_1,\ldots)$. Then, $(B+K_{\lambda})e_0=\lambda e_0$, and so $B+K_{\lambda}$ has an eigenvalue, namely $\lambda$ with an eigenvector $e_0$. Hence, arguing as in the proofs of  (ii) and (iii) of Theorem \ref{pr}, we get the same conclusions for $B+K_{\lambda}$. The perturbation $B+K_{\lambda}$ does not have hypercyclic vectors: if $x=(x_m)_{m=0}^{\infty}\in\ell^p(\mathbb{N}_{0})$, then we have 
      \[
      (B+K_{\lambda})^n(x)=(x_n+\lambda x_{n-1}+\cdots+\lambda^n x_0, x_{n+1},x_{n+2},\cdots),
      \]
      which cannot form a dense set as the projection along the second coordinate converges to $0$. It would be interesting to study various dynamical aspects of compact perturbations of weighted backward shifts on $\ell^p(\mathbb{N}_{0})$. We refer to \cite{Herrero} for compact perturbations of hypercyclic operators, and \cite{Stampfli} for perturbations results on a shift.
  \end{remark}
Next, we show that, under suitable extra assumptions, the zero-one law of hypercyclicity holds. We will consider the adjoint $(F_w^{*})^{\nu}$ as a matrix transformation on $\ell^q(\mathbb{N}_{0})$, where $1/p+1/q=1$, by finding the matrix representation of $(F_w^{*})^{\nu}$. For $\nu\geq 1$ and $n\geq 0$, if we set
    \[
    c_{n,\nu}:=w_{n+1}\cdots w_{n+\nu}\frac{b_{n}}{a_{n+\nu+1}}-w_{n}\cdots w_{n+\nu-1}\frac{a_{n}b_{n+\nu}}{a_{n+\nu}a_{n+\nu+1}}~~~~\text{and}~~~~A_{n,\nu}:=w_{n}\cdots w_{n+\nu-1}\frac{a_{n}}{a_{n+\nu}},
    \]
    then by the action of $F_{w}^{\nu}$ on the normalized Schauder basis $f_{n}(z) =(a_{n}+b_{n}z)z^{n},n \geq 0$, of $\ell^{p}_{a,b}$, we obtain 
    \begin{eqnarray*}
F_{w}^{\nu}(f_{n})(z)&=&w_{n}\cdots w_{n+\nu-1}a_{n}z^{n+\nu}+w_{n+1}\cdots w_{n+\nu}b_{n}z^{n+\nu+1}\\
&=&\frac{w_{n}\cdots w_{n+\nu-1}a_{n}}{a_{n+\nu}}f_{{n+\nu}} + \left(w_{n+1}\cdots w_{n+\nu}b_{n}-\frac{w_{n}\cdots w_{n+\nu-1}a_{n}b_{n+\nu}}{a_{n+\nu}}\right) z^{n+\nu+1}\\
&=&A_{n,\nu}f_{{n+\nu}}+c_{n,\nu}a_{n+\nu+1}z^{n+\nu+1},
\end{eqnarray*}
for $\nu\geq 1$. Now, using the expansion of $z^{n+\nu+1}$ from (\ref{monomial-expansion}), we get
\[
F_{w}^{\nu}(f_{n})(z)=A_{n,\nu}f_{{n+\nu}}+c_{n,\nu}\sum_{j=0}^{\infty}(-1)^{j}\left(\Mprod_{k=0}^{j-1}\frac{b_{n+\nu+1+k}}{a_{n+\nu+2+k}}\right)f_{n+\nu+1+j},
\]
where the term corresponding to $j=0$ in the sum is understood as $1$. Therefore, the matrix of $F_{w}^{\nu}$ with respect to the normalized Schauder basis $\{f_{n}\}_{n\geq 0}$ is given by
$$[F_{w}^{\nu}]:=\begin{bmatrix}
\cdots &\cdots & \cdots&\cdots&\cdots\\
&\nu~~~ rows~~~ of ~~zeros& \\
\cdots &\cdots & \cdots&\cdots&\cdots\\
A_{0,\nu} & 0 & 0&0&\ddots\\
c_{0,\nu} & A_{1,\nu}  & 0&0&\ddots\\
-c_{0,\nu}\frac{b_{\nu+1}}{a_{\nu+2}} & c_{1,\nu} & A_{2,\nu} &0&\ddots\\
 c_{0,\nu}\frac{b_{\nu+1}b_{\nu+2}}{a_{\nu+2}a_{\nu+3}}& -c_{1,\nu}\frac{b_{\nu+2}}{a_{\nu+3}} & c_{2,\nu}&A_{3,\nu}&\ddots\\
-c_{0,\nu}\frac{b_{\nu+1}b_{\nu+2}b_{\nu+3}}{a_{\nu+2}a_{\nu+3}a_{\nu+4}} & c_{1,\nu}\frac{b_{\nu+2}b_{\nu+3}}{a_{\nu+3}a_{\nu+4}} &-c_{2,\nu}\frac{b_{\nu+3}}{a_{\nu+4}} &c_{3,\nu}&\ddots\\
\vdots &\ddots&\ddots&\ddots&\ddots
\end{bmatrix},$$
from which it follows that the matrix of $(F_{w}^{*})^{\nu}$ is
\begin{equation}\label{adoint-matrix}
    [(F_{w}^{*})^{\nu}]:=\begin{bmatrix}
\vdots&\vdots&\vdots&A_{0,\nu} & c_{0,\nu} & -c_{0,\nu}\frac{b_{\nu+1}}{a_{\nu+2}}&c_{0,\nu}\frac{b_{\nu+1}b_{\nu+2}}{a_{\nu+2}a_{\nu+3}}&\ddots\\
\vdots&\nu&\vdots& 0& A_{1,\nu}  & c_{1,\nu} &-c_{1,\nu}\frac{b_{\nu+2}}{a_{\nu+3}}&\ddots\\
\vdots&columns&\vdots&0 & 0& A_{2,\nu} &c_{2,\nu}&\ddots\\
 \vdots&of&\vdots&0& 0 & 0&A_{3,\nu}&\ddots\\
\vdots&zeros&\vdots&0 & 0 &0&0&\ddots\\
\vdots&\vdots&\vdots&\vdots &\ddots&\ddots&\ddots&\ddots
\end{bmatrix}.
\end{equation}

With this matrix transformation, we have a zero-one law for the hypercyclicity of $F_w^*$. Recall that, in Bonilla et al. \cite{Bonilla-Cardeccia} a similar dichotomy was obtained for unilateral weighted backward shifts on certain Fr\'{e}chet sequence spaces including $\ell^p(\mathbb{N}_0)$. In what follows, $\mathbb{C}^{\mathbb{N}_0}$ denotes the space of all complex sequences $(\lambda_0,\lambda_1,\cdots)$, equipped with the topology of coordinate-wise convergence.

\begin{theorem}\label{T}
    Assume that for $1<p<\infty$ and $\frac{1}{p}+\frac{1}{q}=1,$
    \begin{equation}\label{yy}
\sup_{n\geq 0}~\left\lvert\frac{ w_{n}a_{n}}{a_{n+1}}\right\rvert<\infty, \hspace{.2cm} and \hspace{.4cm} \lim \sup_{n\rightarrow \infty}\left \lvert \frac{b_{n}}{a_{n+1}}\right \rvert<1.
\end{equation}\label{d}
Then, the following statements are equivalent:
      \begin{itemize}
          \item[(i)] $F_{w}^{*}$ is not hypercyclic on $(\ell^p_{a,b})^*.$
             \item[(ii)] For each $u \in \ell^q(\mathbb{N}_0),$ one has that $[F_{w}^{*}]^{\nu}u\to 0$ in $\mathbb{C}^{\mathbb{N}_0}$ as $\nu\to \infty,$ where $u$ is regarded as a column vector.
      \end{itemize}
\end{theorem}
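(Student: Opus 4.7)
The proof splits into two implications, with (i) $\Rightarrow$ (ii) being the substantive direction.

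For (ii) $\Rightarrow$ (i), the plan is to invoke the isomorphism $(\ell^p_{a,b})^* \cong \ell^q(\mathbb{N}_0)$ discussed in the earlier remark, via which $F_w^*$ is realized as the matrix transformation $[F_w^*]$ on column vectors. Since norm convergence in $\ell^q$ implies coordinate-wise convergence in $\mathbb{C}^{\mathbb{N}_0}$, a norm-dense orbit would necessarily be dense in $\mathbb{C}^{\mathbb{N}_0}$; this contradicts the hypothesis that every orbit converges to $0$ coordinate-wise, so $F_w^*$ cannot be hypercyclic.

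For (i) $\Rightarrow$ (ii), the plan is to first translate non-hypercyclicity into an analytic condition via Theorem \ref{unilateral}(iv): under \eqref{yy}, there is some $\mu \in \mathbb{N}$ with $\liminf_n |a_{\mu+n}/\prod_{k=0}^{n-1} w_{\mu+k}| > 0$. A telescoping identity,
\[
\frac{a_{\mu_1+m}}{\prod_{k=0}^{m-1} w_{\mu_1+k}} = \frac{1}{w_{\mu_1}\cdots w_{\mu_2-1}} \cdot \frac{a_{\mu_2+n}}{\prod_{k=0}^{n-1} w_{\mu_2+k}}, \qquad m = n+\mu_2-\mu_1,
\]
shows (using that the weights are nonzero) that this positivity is independent of $\mu$; hence for every $\mu \geq 0$ there exist $c_\mu > 0$ and $N_\mu \in \mathbb{N}$ with $|\prod_{k=0}^{\nu-1} w_{\mu+k}/a_{\mu+\nu}| \leq 1/c_\mu$ for $\nu \geq N_\mu$. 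Fix $u \in \ell^q(\mathbb{N}_0)$ and $n \geq 0$; the $n$-th row of \eqref{adoint-matrix} yields
\[
\big([F_w^*]^\nu u\big)_n = A_{n,\nu}\, u_{n+\nu} + c_{n,\nu} \sum_{j=0}^{\infty} (-1)^j \Bigl(\Mprod_{k=0}^{j-1} \frac{b_{n+\nu+1+k}}{a_{n+\nu+2+k}}\Bigr) u_{n+\nu+1+j},
\]
and the goal is to show each contribution vanishes as $\nu \to \infty$. Writing $A_{n,\nu} = a_n \bigl(a_{n+\nu}/\prod_{k=0}^{\nu-1} w_{n+k}\bigr)^{-1}$ and applying the above estimate with $\mu = n$ give $\sup_\nu |A_{n,\nu}| < \infty$, so the first term vanishes since $u_{n+\nu} \to 0$. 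Splitting $c_{n,\nu}$ into its two defining summands and using the $\mu = n+1$ estimate on the first and the product $|A_{n,\nu}|\,|b_{n+\nu}/a_{n+\nu+1}|$ with $\limsup_n |b_n/a_{n+1}|<1$ on the second, one obtains $\sup_\nu |c_{n,\nu}|<\infty$. Finally, for $\nu$ large the geometric bound $|b_{n+\nu+1+k}/a_{n+\nu+2+k}| \leq r < 1$ holds for all $k \geq 0$, and H\"older's inequality gives
\[
\Bigl|\sum_{j\geq 0}(-1)^j \Mprod_{k=0}^{j-1} \frac{b_{n+\nu+1+k}}{a_{n+\nu+2+k}}\, u_{n+\nu+1+j}\Bigr| \leq \Bigl(\sum_{j\geq 0} r^{jp}\Bigr)^{1/p} \Bigl(\sum_{m \geq n+\nu+1}|u_m|^q\Bigr)^{1/q},
\]
which tends to $0$ as the tail of $u \in \ell^q$ vanishes.

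The main obstacle I anticipate is the boundedness of $c_{n,\nu}$, since it requires using \emph{both} ingredients of \eqref{yy} simultaneously---the (propagated) non-hypercyclicity condition at a shifted starting index for one term, and $\limsup_n |b_n/a_{n+1}|<1$ for the other. Once this uniform bound is in hand, the H\"older/dominated-tail argument above is routine and the proof completes.
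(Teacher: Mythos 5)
Your proposal is correct and follows essentially the same route as the paper: non-hypercyclicity is converted via Theorem \ref{unilateral} into the bound $\sup_{n}\lvert w_0\cdots w_{n-1}/a_n\rvert<\infty$ (your telescoping step makes explicit the propagation to all starting indices that the paper leaves implicit), and then each coordinate of $[F_w^*]^\nu u$ is killed by the boundedness of $A_{n,\nu}$ and $c_{n,\nu}$ together with the geometric bound on $\lvert b_m/a_{m+1}\rvert$ and H\"older applied to the $\ell^q$ tail of $u$. The converse direction you supply (norm density is incompatible with coordinatewise convergence of every orbit to $0$) is the trivial one the paper omits.
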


\begin{proof}
Note that, by the first two conditions of the theorem, $F_w$ becomes a continuous operator, and also, the space $\ell^p_{a,b}$ contains the polynomials. For $\nu\geq 1$, we recall the matrix of $(F_{w}^{*})^{\nu}$ from (\ref{adoint-matrix}). Then, for $u=(\lambda_{n})_{n\geq 0}\in \ell^{q}(\mathbb{N}_{0}),$ we get
\begin{equation}\label{cc}
   [(F_{w}^{*})^{\nu}]u=\sum_{n=0}^{\infty} \alpha_{n,\nu} ~ e_{n}.
\end{equation}
where $\{e_{n}\}_{n\geq 0}$ is the standard basis in $\ell^q(\mathbb{N}_{0})$, and 
\[
\alpha_{n,\nu}= \lambda_{\nu+n} A_{n,\nu} +\lambda_{\nu+n+1}c_{n,\nu}+c_{n,\nu}\displaystyle{\sum_{j=2}^{\infty}(-)^{j-1}\lambda_{\nu+n+j}\frac{b_{\nu+n+1}\cdots b_{\nu+n+j-1}}{a_{\nu+n+2}\cdots a_{\nu+n+j}}}.
\]
Now, assume that $F_{w}^{*}$ is not hypercyclic on $(\ell^p_{a,b})^*$. By Theorem \ref{unilateral}, we have 
\[
M:=\sup_{n\geq 1}\left|\frac{w_{0}\cdots w_{n-1}}{a_{n}}\right|<\infty.
\]
We will show that $\lim_{\nu \rightarrow \infty}\alpha_{n,\nu}=0$, for each fixed $n\geq 0$, by looking at the three terms in $\alpha_{n,\nu}$. For this, note that 
\begin{center}
$A_{n,\nu}=w_{n}\cdots w_{n+\nu-1}\frac{a_{n}}{a_{n+\nu}}=\frac{w_{0} \cdots w_{n+\nu-1}}{a_{n+\nu}}\frac{a_{n}}{w_{0}\cdots w_{n-1}}$, 
\end{center}
and hence, $\{A_{n,\nu}:\nu\geq 1\}$ is bounded. Also, since $\limsup_n |b_n/a_{n+1}|<1$, we can find some $r<1$ and $N\geq 1$ such that 
\begin{equation}\label{dd}
|b_n/a_{n+1}|<r,
\end{equation}
for all $n\geq N$. We now see that $\{c_{n,\nu}:\nu\geq 1\}$ is bounded for each fixed $n\geq 0$, from the following observation:
\begin{eqnarray*}
   |c_{n,\nu}|&=&\left|w_{n+1}\cdots w_{n+\nu}\frac{b_{n}}{a_{n+\nu+1}}-w_{n}\cdots w_{n+\nu-1}\frac{a_{n}b_{n+\nu}}{a_{n+\nu}a_{n+\nu+1}}\right| \\
   &\leq& \left|\frac{w_{0}\cdots w_{n+\nu}}{a_{n+\nu+1}}\right|\left|\frac{a_{n+1}}{w_{0}\cdots w_{n}}\right|\left|\frac{b_{n}}{a_{n+1}}\right|+\left|\frac{w_{0}\cdots w_{n+\nu-1}}{a_{n+\nu}}\right|\left|\frac{a_{n}}{w_{0}\cdots w_{n-1}}\right|\left|\frac{b_{n+\nu}}{a_{n+\nu+1}}\right|.
\end{eqnarray*}
Using \eqref{dd} and H\"{o}lder inequality, we also obtain that, for large $\nu$
\[
\left|\sum_{j=2}^{\infty}(-)^{j-1}\lambda_{\nu+n+j}\frac{b_{\nu+n+1}\cdots b_{\nu+n+j-1}}{a_{\nu+n+2}\cdots a_{\nu+n+j}}\right|\leq C \left(\sum_{j\geq \nu}|\lambda_j|^q\right)^{1/q},
\]
where $C$ is a constant depending only on $r$. Combining all these inequalities, we can conclude that the coefficients in \eqref{cc} converge to $0$, as $\nu\rightarrow \infty$.
\end{proof} 

Now, we obtain the zero-one law of hypercyclicity for $F_w^*$ in view of the above theorem.
\begin{proposition}
Under the assumptions \eqref{yy} of Theorem \ref{T}, the following are equivalent.
 \begin{itemize}
          \item[(i)] $F_{w}^{*}$ is hypercyclic on $(\ell^p_{a,b})^*.$
             \item[(ii)] $F_w^*$ has an orbit admitting a non-zero weak sequential limit point.
             \item[(iii)] $F_w^*$ has an orbit with a non-zero limit point in the norm topology.
      \end{itemize}
\end{proposition}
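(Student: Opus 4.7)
The plan is to establish the cyclic chain $(i)\Rightarrow(iii)\Rightarrow(ii)\Rightarrow(i)$, with Theorem \ref{T} doing all of the real work in the last step.

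For $(i)\Rightarrow(iii)$, I pick a hypercyclic vector $\psi$ for $F_w^*$ and any nonzero $v\in(\ell^p_{a,b})^*$; density of the orbit gives a subsequence $(n_k)$ with $(F_w^*)^{n_k}\psi\to v$ in norm, exhibiting $v$ as a nonzero norm limit point of the orbit of $\psi$. The step $(iii)\Rightarrow(ii)$ is immediate, since norm convergence implies weak convergence. Neither of these uses any special structure of $F_w^*$.

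The substance of the proposition is the converse $(ii)\Rightarrow(i)$, which I would prove by contrapositive. Assume that $F_w^*$ is not hypercyclic on $(\ell^p_{a,b})^*$. Because the standing hypotheses \eqref{yy} are in force, Theorem \ref{T} gives that for every $u\in\ell^q(\mathbb{N}_0)$ one has $[F_w^*]^\nu u\to 0$ coordinatewise in $\mathbb{C}^{\mathbb{N}_0}$ as $\nu\to\infty$. Now identify any $\psi\in(\ell^p_{a,b})^*$ with its coefficient sequence in the unconditional Schauder basis $\{f_n^*\}$ from Proposition \ref{k_{n}}(ii), so that $\psi$ is regarded as an element of $\ell^q(\mathbb{N}_0)$. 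Suppose that $(F_w^*)^{n_k}\psi\to\phi$ weakly in $(\ell^p_{a,b})^*$ along some subsequence. For each fixed $m\geq 0$, the map $L\mapsto L(f_m)$ is a bounded linear functional on $(\ell^p_{a,b})^*$ (it is the canonical image of $f_m\in\ell^p_{a,b}$ in $(\ell^p_{a,b})^{**}$), and by the biorthogonality $f_n^*(f_m)=\delta_{nm}$ it reads off exactly the $m$-th coefficient in the $\{f_n^*\}$-expansion. Weak convergence therefore forces coordinatewise convergence of coefficient sequences, so $\phi$ agrees coordinatewise with $\lim_k [F_w^*]^{n_k}\psi$, which by Theorem \ref{T} is $0$. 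Thus $\phi=0$, every weak sequential limit point of every orbit is trivial, and (ii) fails.

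The main delicate point, rather than a serious obstacle, is verifying that the matrix identification underlying Theorem \ref{T} is consistent with the dual pairing against $\{f_n\}$, so that ``coordinatewise convergence in $\mathbb{C}^{\mathbb{N}_0}$'' really coincides with pointwise convergence of the bounded linear functionals $L\mapsto L(f_m)$; once that bridge is in place, the dichotomy drops out of Theorem \ref{T} with no further computation.
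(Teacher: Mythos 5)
Your proposal is correct and follows exactly the route the paper intends: the implications $(i)\Rightarrow(iii)\Rightarrow(ii)$ are the standard trivialities, and $(ii)\Rightarrow(i)$ is obtained by contraposition from Theorem \ref{T}, using that weak convergence in $(\ell^p_{a,b})^*$ tested against the vectors $f_m$ reads off the coordinates in the $\{f_n^*\}$-expansion, which tend to $0$ when $F_w^*$ is not hypercyclic. The paper states the proposition ``in view of the above theorem'' without writing out the argument, and your write-up supplies precisely the intended details.
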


{\bf Acknowledgments.} We thank the referees for a careful reading of our manuscript and providing a number of suggestions which now helped to improve the overall presentation of the paper. B.K. Das is supported financially by CSIR research fellowship (File No.: 09/1059(0037)/2020-EMR-I), and A. Mundayadan is partially funded by a Start-Up Research Grant of SERB-DST (File. No.: SRG/2021/002418).

\bibliographystyle{amsplain}

\end{document}